\numberwithin{figure}{section}
\numberwithin{table}{section}
\theoremstyle{plain}
\newtheorem{thm}{Theorem}[section]
\crefname{thm}{Theorem}{Theorems}
\newtheorem*{prop*}{Proposition}
\newtheorem*{thm*}{Theorem}
\crefname{prop}{Proposition}{Propositions}
\newtheorem{lem}[thm]{Lemma}
\crefname{lem}{Lemma}{Lemmata}
\newtheorem{cor}[thm]{Corollary}
\crefname{cor}{Corollary}{Corollaries}
\crefname{conj}{Conjecture}{Conjectures}
\crefname{equation}{Equation}{Equations}
\newtheorem{thmx}{Theorem}
\crefname{thmx}{Theorem}{Theorems}
\theoremstyle{definition}
\newtheorem{ex}[thm]{Example}
\newtheorem{dfn}[thm]{Definition}
\newtheorem*{dfn*}{Definition}
\theoremstyle{remark}
\newtheorem{rmk}[thm]{Remark}
\newtheoremstyle{maintheorem}{}{}{\itshape}{}{\bfseries}{}{.5em}{#1 \!\thmnote{\ #3}.}
\theoremstyle{maintheorem}
\let\c@figure\c@thm
\let\c@table\c@thm
\crefname{figure}{Figure}{Figures}
\crefname{table}{Table}{Tables}
\newcommand{\Aut}{\operatorname{Aut}}
\newcommand{\SAut}{\operatorname{SAut}}
\newcommand{\GL}{\operatorname{GL}}
\newcommand{\SL}{\operatorname{SL}}
\newcommand{\MCG}{\operatorname{MCG}}
\newcommand{\Sp}{\operatorname{Sp}}
\newcommand{\I}{\mathrm{I}}
\def\C{\mathbb{C}}
\def\R{\mathbb{R}}
\def\Z{\mathbb{Z}}
\def\1{\mathbbm{1}}
\def\s-{\smallsetminus}
\def\iff{if and only if }
\newcommand{\Adj}{\operatorname{Adj}}
\newcommand{\Sq}{\operatorname{Sq}}
\newcommand{\type}[2]{\mathtt{#1}_{\mathtt{#2}}}
\newcommand{\typeA}[1]{\type{A}{#1}}
\newcommand{\typeB}[1]{\type{B}{#1}}
\newcommand{\typeC}[1]{\type{C}{#1}}
\newcommand{\typeD}[1]{\type{D}{#1}}
\newcommand{\typeE}[1]{\type{E}{#1}}
\newcommand{\typeF}[1]{\type{F}{#1}}
\newcommand{\typeG}[1]{\type{G}{#1}}
\newcommand{\A}{\mathcal{A}}
\newcommand{\Airr}{\A_{\textsl{irr}}}
\def\lambdaSlthreea{0.280406}
\def\lambdaSlthree{0.542497}
\def\lambdaSlfour{1.316499}
\def\lambdaSlfive{2.690925}
\def\lambdaSpfourb{0.879159}
\def\lambdaSpfourc{1.412187}
\def\lambdaAdjAbb{0.158606}
\def\lambdaAdjAbc{0.273954}
\def\lambdaAdjCbc{0.244935}
\def\lambdaAdjGbc{1.56799}
\def\lambdaGbb{0.967685}
\def\lambdaLevels{2.417393}
\newcommand{\Kzhconst}[2]{\fpeval{floor(sqrt(2*#1/#2), 5)}}
\DeclareMathOperator{\lspan}{Span}
\DeclareMathOperator{\lev}{Lev}
\newcounter{dawidcomments}
\newcommand{\dawid}[1]{\textbf{\color{red}(D\arabic{dawidcomments})}
\marginpar{\scriptsize\raggedright\textbf{\color{red}(D\arabic{dawidcomments})Dawid: }#1}
\addtocounter{dawidcomments}{1}}
\newcounter{marekcomments}
\author{Marek Kaluba}
\email{marek.kaluba@kit.edu}
\address[M.~Kaluba]{Department of Mathematics
    Karlsruhe Institute of Technology
    Englerstr.~2
    D-76131 Karlsruhe
    Germany
}
\author{Dawid Kielak}
\email{kielak@maths.ox.ac.uk}
\address[D.~Kielak]{Mathematical Institute,
	Andrew Wiles Building,
	Observatory Quarter,
	University of Oxford,
	Oxford,
	OX2 6GG,
	United Kingdom}
\title[Kazhdan constants for Chevalley groups over $\Z$]{Kazhdan constants for Chevalley groups\\ over the integers}
\begin{document}

\begin{abstract}
    We compute lower bounds for Kazhdan constants of Chevalley groups over the integers,
    endowed with the standard Steinberg generators.
    For types other than $\typeA{n}$, these are the first explicit asymptotically sharp such bounds.
    The method relies on establishing a new connection between the
    structure of a root system grading a family of groups and
    the behaviour of the square of the Laplace operator in the family.
    %
    %We also give an explicit construction of Steinberg generators for mapping class groups, and discuss a number of computation that, if succesfully performed, might lead to establishing property $T$ for these groups.
\end{abstract}

\maketitle

\section{Introduction}

In his seminal paper \cite{Kazhdan1967}, Kazhdan introduced property $T$, an important rigidity property for locally compact groups.
% The definition stipulates the existence of a compact subset $S$ of a locally compact group $G$, and a constant $\kappa$, such that every unitary representation of $G$ that admits unit vectors moved by less that $\kappa$ by every element of $S$ must also admit $G$-invariant unit vectors. The set $S$ can be easily seen to be necessarily generating, and the maximal $\kappa$ is refereed to as the \emph{Kazhdan constant} of the pair $(G,S)$.
For such a group $G$ the definition stipulates the existence
of a compact subset $S \subset G$ and a constant $\kappa$,
such that for every unitary representation $\pi$ of $G$ either
every element of $S$ moves all unit vectors by at least $\kappa$
or $\pi$ contains a non-zero $G$-invariant vector.
When $G$ is discrete, the set $S$ can be easily seen to be necessarily generating and
the maximal  $\kappa = \kappa(G,S)$ as above is known as the \emph{Kazhdan constant}.

In the paper Kazhdan proved that all simple algebraic groups over
local fields of rank at least $3$, and most such groups of rank $2$, have property $T$;
his result also covered lattices in such groups.
% However he did not provide any explicit calculations of the constants.
This was later extended by Vaserstein~\cite{Vaserstein1968} to
cover all groups of rank at least $2$.
In particular, using the theorem of Borel--Harish-Chandra~\cite{BorelHarish-Chandra1962}
we see that the result applies to Chevalley groups
over the integers associated to any irreducible root system of rank at least $2$.

It turned out that for many
applications, not limited to the estimates of the expansion constants,
one needs the quantitative knowledge of the associated Kazhdan constants
(see e.g. the introduction of \cite{BekkaHarpeValette} for an overview).
Unfortunately, neither of the results quoted above sheds light on the possible values of 
such constants.

The computation of these values is a problem with a long and distinguished history,
discussed for example by de la Harpe and Valette in \cite[Chapter 1]{delaHarpeValette1989}.
In particular, they restate the question of Serre about the Kazhdan constant
for $\SL_3(\Z)$ endowed with the standard generating set.

Finding Kazhdan constants, somewhat surprisingly, seems much harder than 
establishing property $T$.
For finite groups Kazhdan constants can be computed since the
representation theory of such groups is completely understood, see for example \cite{BacherDelaharpe1994}.
To the authors' knowledge the only cases of \emph{infinite} groups where
the exact values are known are groups acting transitively on $\widetilde{A}_2$ buildings,
as computed by Cartwright--M{\l}otkowski--Steger \cite{Cartwright1994}.
However, the problem of establishing a lower bound on the constants seems slightly
more accessible and has been an area of very active research recently.

There are several methods that can be used to bound Kazhdan constants from below.
Broadly speaking, one can:
analyse angles between subspaces in unitary representations \cites{Burger1991,DymaraJanuszkiewicz2002};
use the property of bounded generation \cite{Shalom1999,Neuhauser2003,Kassabov2005};
establish large spectral gap of the graph Laplacian of a link in a Cayley graph \cite{DymaraJanuszkiewicz2002,Zuk2003};
present the group as a graph of groups with sufficient co-distance at vertices
\cite{ErshovJaikin2010,Ershovetal2017};
or use sums-of-squares decompositions in the group ring \cite{Ozawa2016}.
We are going to focus on this last method.
A different, geometric method of partially defined cocycles
\cite[Section 4]{Zuk2003} seems to be under-explored so far
from the theoretical point of view (cf. \cite{Nitsche2020,Ozawa2022}).

The mentioned method of Ozawa focuses on finding a decomposition of
the squared group Laplacian into sum of squares.
While the advantage of this formulation is its simplicity,
the decomposition must be repeated for every group, separately,
unless the family of groups which we are interested in has some internal structure.
E.g. \cite{Kalubaetal2021} leverages the structure of
$\{\SL_n(\Z)\}_{n\geqslant 3}$ to exhibit a \emph{single computation}
that proves property T for \emph{all} groups in the family.
The external similarities between the groups $\SL_n(\Z)$ and $\Aut(F_n)$
(for which the method of \cite{Kalubaetal2021} also works)
lead the authors to believe that these results are a specialization of a more
general pattern which is independent of the nature of the particular groups
and depends on the internal structure of the family instead.

\bigskip
The central aim of this paper is to generalize the method of \cite{Kalubaetal2021}
beyond the family of special automorphisms groups of $\Z^n$ (or $F_n$)
and establish a new general method for computing lower bounds for
Kazhdan constants across families of groups.
In the process the authors found that the method is particularly well tailored
to the families of Steinberg groups which is exemplified in the following sections
by executing the envisioned programme for the universal Chevalley groups over the
integers.

\bigskip

The key example of our settings is the family of
special linear groups $\SL_n(\Z)$ (for $n \geqslant 3$),
generated by the set $S_n$ of elementary matrices
that differ from the identity by a single $\pm 1$ in an entry away from the diagonal.
With these generating sets one can think of $\SL_n(\Z)$ as
the universal Chevalley group over $\Z$ of type $\typeA{n-1}$.

Kazhdan constants for this family have been particularly well studied.
The first (partial) lower bounds of $\SL_3(\Z)$
was produced by Burger~\cite{Burger1991},
and eight years later Shalom~\cite{Shalom1999} gave lower bounds
for all groups $\SL_n(\Z)$ using the concept of bounded generation.
Subsequently, Kassabov~\cite{Kassabov2005} obtained a much better estimate which
we know to be asymptotically tight, as witnessed by the upper bound $\sqrt{\frac{2}{n}}$ by \.Zuk.
% Work of Hadad~\cite{Hadad2007} provides also bounds
% which depend only on the size of the generating set,
% i.e. are uniform with respect to the rank.
However, even for small $n$ these lower bounds differ by two orders of magnitude
from the upper bound.

A completely new method of establishing lower bounds for Kazhdan constants of
finitely generated groups emerged from the work of Ozawa~\cite{Ozawa2016}.
Using a quantitative version due to Netzer--Thom~\cite{NetzerThom2015},
one can utilise a computer to calculate such lower bounds for specific groups.
This was done by Netzer--Thom and Fujiwara--Kabaya~\cite{FujiwaraKabaya2017}
who obtained vastly improved bounds for $\SL_3(\Z)$ and $\SL_4(\Z)$.
This very direct approach culminated with the paper of the first author with Nowak~\cite{KalubaNowak2018},
in which the best known lower bounds for $\SL_n(\Z)$ with $n \in \{3,4,5\}$ were computed.
Finally, using a derived computer-assisted calculation and a form of induction,
the authors together with Nowak \cite{Kalubaetal2021} computed the best known bounds
for $\SL_{n+1}(\Z)$ with $n \geqslant 5$.
We list them in \cref{table intro}.
What is worth pointing out is that these results show that the actual value of
Kazhdan constants for special linear groups seems to be in the upper half of
the upper bound.

\begin{table}
    \begin{tabular}{@{}lcrrr@{}}\toprule
                               &                & \multicolumn{2}{c}{lower bound for $\kappa$}                                                                                                                              \\\cmidrule(){3-4}
        type                   & $n$            & $R=2$                                           & $R=3$                                                                 & comments                                        \\\midrule
        $\typeA{2}$            &                & $\Kzhconst{\lambdaSlthreea}{12}$                & $\Kzhconst{\lambdaSlthree}{12}$                                       & direct computations \cite{Kalubaetal2019}                            \\
        $\typeA{3}$            &                & $\Kzhconst{\lambdaSlfour}{24}$                                                                                                                                            \\
        $\typeA{4}$            &                & $\Kzhconst{\lambdaSlfive}{40}$                                                                                                                                            \\
        $\typeA{n}$            & $n\geqslant 5$ & $\sqrt{\frac{ 0.5(n-1)}{n(n+1)}}$               &                                                                       & see Section 5.1 of \cite{Kalubaetal2021}        \\
        $\typeD{n}$            & $n\geqslant 4$ &                                                 & $\sqrt{\frac{ \lambdaAdjAbc(n-2)}{n(n-1)}}$                           & see \cref{simply laced}                         \\
        $\typeE{6}$            &                &                                                 & $\Kzhconst{\fpeval{10*\lambdaAdjAbc}}{\fpeval{2*72}}$                                                                   \\
        $\typeE{7}$            &                &                                                 & $\Kzhconst{\fpeval{16*\lambdaAdjAbc}}{\fpeval{2*126}}$                                                                  \\
        $\typeE{8}$            &                &                                                 & $\Kzhconst{\fpeval{28*\lambdaAdjAbc}}{\fpeval{2*240}}$                                                                  \\
        \midrule
        $\typeB{2}=\typeC{2}$  &                & $\Kzhconst{\lambdaSpfourb}{16}$                 & $\Kzhconst{\lambdaSpfourc}{16}$                                       & direct computations                             \\
        $\typeB{n}, \typeC{n}$ & $n\geqslant 3$ & $\sqrt{\frac{\fpeval{\lambdaLevels/2.0}}{n^2}}$ & $\sqrt{\frac{\fpeval{\lambdaAdjCbc/2.0}(n-1)}{n^2}}$                  & see \cref{doubly laced via levels,doubly laced} \\
        $\typeF{4}$            &                &                                                 & $\Kzhconst{\fpeval{4*\lambdaAdjAbc + 3\lambdaAdjCbc}}{\fpeval{2*48}}$                                                   \\
        \midrule
        $\typeG{2}$            &                & $\Kzhconst{\lambdaGbb}{24}$                     &                                                                       & see \cref{triply laced}                         \\\bottomrule
        \addlinespace[\belowrulesep]
    \end{tabular}
    \caption{
        The best known lower bounds for Kazhdan constants.
        Note: for type $\typeC{n}$ the estimate with
        $R=2$ (via \cref{doubly laced via levels}) is better than with
        $R=3$ (via \cref{doubly laced}) for $n \leqslant 9$.
        %\vspace{-2em}
    }
    \label{table intro}
\end{table}

In this work we show that the somewhat \emph{ad hoc} constructions of
\cite{Kalubaetal2021} does in fact follow from a much more general construction
and applies to groups graded by root systems, as defined below.
We explore, in a quantitative and systematic way, the connection of
various group ring elements defined by the root systems of the underlying
Chevalley groups and we uncover a general method of proving property $T$
of which \cite{Kalubaetal2021} is a special case.
As an outcome, we are able to give the very first explicit asymptotically tight lower bounds for
universal Chevalley groups over $\Z$ corresponding to
all irreducible root systems of rank at least $2$.
In particular, we give concrete lower bounds for Kazhdan constants of
symplectic groups $\Sp_{2n}(\Z)$, with respect to the usual (Steinberg) generators. This was previously done by Neuhauser \cite{Neuhauser2003}, however his bounds are asymptotic to $\frac 1 n$, rather than the optimal $\frac 1 {\sqrt{n}}$.
This result complements therefore \cite[Theorem 7.12]{Ershovetal2017} which
gives the asymptotics of such lower bounds without providing concrete values.

\begin{thmx}
    Let $G$ be an elementary Chevalley group of type $\Omega$ over the integers,
    and let $S$ denote the set of its Steinberg generators.
    When $\Omega$ is irreducible and of rank at least $2$,
    then the pair $(G,S)$ has property $T$ with Kazhdan constant bounded below
    by the number indicated in \cref{table intro}.
\end{thmx}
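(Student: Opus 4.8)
The plan is to reduce the whole statement to a finite collection of sum-of-squares certificates in group rings, following the strategy of Ozawa in the quantitative form of Netzer--Thom. Write $\Delta = \sum_{\alpha\in\Omega}\bigl(2-x_\alpha(1)-x_\alpha(-1)\bigr)\in\Q[G]$ for the Laplacian attached to the Steinberg generators. It suffices to produce, for each irreducible root system $\Omega$ of rank at least $2$, a rational $\lambda>0$ and finitely many $\xi_i\in\Q[G]$ with $\Delta^2-\lambda\Delta=\sum_i\xi_i^*\xi_i$: then $\pi(\Delta)\succeq\lambda$ for every unitary representation $\pi$ of $G$ with no nonzero invariant vector, and evaluating $\langle\pi(\Delta)v,v\rangle=\sum_\alpha\lVert v-\pi(x_\alpha(1))v\rVert^2$ on a unit vector $v$ and applying pigeonhole yields $\kappa(G,S)\geqslant\sqrt{2\lambda/\lvert S\rvert}$, which is precisely the format of \cref{table intro}. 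So the task becomes: for each type, exhibit such a $\lambda$ as large as the table demands, with the $\xi_i$ supported on a ball of the prescribed radius $R\in\{2,3\}$ in the Cayley graph.

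The engine is the root-system grading. Since $G$ is generated by its root subgroups $x_\alpha(\Z)\cong\Z$, one has $\Delta=\sum_\alpha\Delta_\alpha$ with $\Delta_\alpha=2-x_\alpha(1)-x_\alpha(-1)$, so that $\Delta^2=\Sq_\Omega+\Adj_\Omega+\mathrm{Op}_\Omega$, where $\Sq_\Omega=\sum_\alpha\Delta_\alpha^2$, the ``adjacent'' term $\Adj_\Omega$ collects $\Delta_\alpha\Delta_\beta$ over pairs of distinct roots whose root subgroups do not commute, and the ``opposite'' term $\mathrm{Op}_\Omega$ collects the remaining pairs, for which the Chevalley relations force $x_\alpha(1)$ and $x_\beta(1)$ to commute and hence $\Delta_\alpha\Delta_\beta=\bigl((1-x_\alpha(1))(1-x_\beta(1))\bigr)^*\bigl((1-x_\alpha(1))(1-x_\beta(1))\bigr)$ is already a Hermitian square. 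Every pair contributing to $\Adj_\Omega$ is contained in an irreducible sub-root-system $\Psi\subseteq\Omega$ of rank $2$ (or, for the sharper $R=3$ certificates, rank $3$) --- this is the point at which the hypothesis $\operatorname{rank}\Omega\geqslant 2$ is used, and must be, since $\mathrm{St}(\typeA{1})=\SL_2(\Z)$ has no property $T$ --- and $\Delta_\alpha\Delta_\beta$ then lies in the image of $\Q[\mathrm{St}(\Psi)(\Z)]$ in $\Q[G]$. I would therefore prove, once and for all at the level of abstract root-graded groups, a localization lemma: if for each isomorphism type of $\Psi$ that occurs there is a ``local'' certificate showing that a suitable nonnegative combination $\Adj_\Psi+c_\Psi\Sq_\Psi$ dominates $\lambda_\Psi\Delta_\Psi$ as a sum of squares in $\Q[\mathrm{St}(\Psi)(\Z)]$, then summing these over all $\Psi$ of a chosen family inside $\Omega$, weighting by the (Weyl-orbit-dependent, rank-independent) number of $\Psi$'s through a given root and through a given pair of roots, and adding the manifest squares of $\mathrm{Op}_\Omega$, assembles into a global certificate for $\Delta^2-\lambda_\Omega\Delta$ with an explicitly computable $\lambda_\Omega$. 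The rank-independence of these weights is exactly what makes $\lambda_\Omega$ grow linearly in $\operatorname{rank}\Omega$ and the resulting bounds asymptotically of the optimal order $1/\sqrt{n}$; the precise formulae appear in \cref{simply laced,doubly laced via levels,doubly laced,triply laced}.

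It remains to supply the local certificates. The rank-$2$ systems that occur are $\typeA{2}$, $\typeB{2}=\typeC{2}$ and $\typeG{2}$, with rank-$3$ refinements $\typeA{3}$, $\typeB{3}$, $\typeC{3}$ (and again $\typeG{2}$); for each I would solve a semidefinite program over the relevant Steinberg group, with the $\xi_i$ supported on the ball of radius $2$ or $3$, and then round to an exact rational sum-of-squares decomposition whose positivity is verified symbolically or in interval arithmetic --- these runs produce the numerical constants fixed above. The simply-laced types $\typeA{n}$, $\typeD{n}$, $\typeE{6},\typeE{7},\typeE{8}$ then follow directly from the localization lemma, except for $\SL_3(\Z),\SL_4(\Z),\SL_5(\Z)$ (and likewise $\Sp_4(\Z)$ and $G_2(\Z)$), which have no proper sub-root-systems of the needed rank and are handled by a single direct global computation. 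For the multiply-laced types long and short roots must be separated: the two ``levels'' of the Laplacian have to be recombined with the correct relative weight --- the level-decomposition route of \cref{doubly laced via levels}, complemented by the alternative rank-$3$ route of \cref{doubly laced}, which is why the $\typeC{n}$ row lists two incomparable bounds --- and $\typeF{4}$ and $\typeG{2}$ require the $\typeA{2}$-, $\typeC{2}$- and $\typeG{2}$-type local constants combined in the proportions shown in the table.

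I expect the genuine difficulty to be calibrating the local certificates. A crude local inequality bounding only the cross-terms (say $\Adj_\Psi\succeq\lambda_\Psi\Delta_\Psi$) is far too lossy once assembled, because many more sub-systems pass through a single root than through a given pair; one therefore needs the sharper combined statement with $c_\Psi$ chosen small enough that, after summing with the correct multiplicities and using $\mathrm{Op}_\Omega\succeq 0$, one still recovers $\Delta^2-\lambda_\Omega\Delta\succeq 0$ with $\lambda_\Omega$ of the claimed size --- and getting the best possible trade-off between $\lambda_\Psi$ and $c_\Psi$ is exactly where the semidefinite computations do their work, and what forces the choice of radius $R$. In the multiply-laced cases there is the further subtlety that the long/short interaction is only partially visible within a rank-$2$ sub-system, so these need either the level decomposition or the wider rank-$3$ window to keep $\lambda_\Omega$ positive. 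A second, more routine but still essential, point is rigour of the computer-assisted part: every local and every small-rank direct decomposition must be certified over $\Q$, not merely numerically, so that the inequalities --- and hence the Kazhdan bounds of \cref{table intro} --- hold exactly.
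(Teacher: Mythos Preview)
Your strategy---Ozawa's criterion, a decomposition of $\Delta^2$ along the root grading, and assembly of a global certificate from local rank-$2$ ones---is the paper's. But several details you flag as difficulties are inverted.

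The inequality you dismiss as ``far too lossy'', namely $\Adj_W\geqslant\lambda_W\Delta_W$ with no $\Sq$-correction, is exactly what the paper establishes for $W$ of type $\typeA{2}$ and $\typeC{2}$ (\cref{adj in sl,adj in sp}) and exactly what drives \cref{main thm,main cor,simply laced,doubly laced}. Your counting worry is backwards: every pair of non-proportional roots spans a \emph{unique} $2$-plane, so $\sum_{W\in\A}\Adj_W=\Adj_V$ with no overcount on the cross-term side; that many planes pass through a single root only \emph{increases} the coefficient $\sum_{W\ni\alpha}\lambda_W$ of $\Delta_\alpha$ on the right, and this multiplicity is precisely the $\gamma(\Omega)$ that makes $\lambda_\Omega$ grow linearly in the rank. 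Introducing a positive $c_\Psi\Sq_\Psi$ would \emph{create} an overcount (square terms, unlike cross terms, do lie in many planes) rather than cure one. Relatedly, the parameter $R$ in \cref{table intro} is the radius of the ball supporting the $\xi_i$, not the rank of a subsystem: the $R=3$ entries for simply-laced types and for $\typeB{n},\typeC{n},\typeF{4}$ via \cref{doubly laced} still come from rank-$2$ certificates, merely witnessed on larger balls. Only the alternative $\typeC{n}$ route via levels (\cref{doubly laced via levels}) genuinely passes through a rank-$3$ computation, and there the mechanism is Weyl-averaging a single $\Sp_6(\Z)$ inequality, not your localisation lemma. Finally, $\typeA{3}$ and $\typeA{4}$ contain plenty of $\typeA{2}$ subsystems, so the inductive method does apply to $\SL_4(\Z)$ and $\SL_5(\Z)$; the direct entries in the table are there because they sharpen the constants, not because localisation fails.
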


 The parameter $R$ of \cref{table intro}  is the radius of a ball in the Cayley graph supporting the elements in the group ring that are used in a sum-of-Hermitian-squares decomposition, see \cref{sos def}.

It is very curious that the elements in the group ring mentioned above turn out to
satisfy a form of spectral gap in all the cases considered above,
but we are not able to prove it without a computer
even in the simplest case of $\Omega = \typeA{2}$.
The only computer-free result of relevance here is contained in a recent article
of Ozawa~\cite{Ozawa2022}.

Part of the motivation for looking into the symplectic groups was
the rough analogy between special linear groups over $\Z$ and
automorphism groups of free groups on one side, and
symplectic groups over $\Z$ and mapping class groups of surfaces on the other.
To have a hope of employing a similar line of attack on the question of
property T for mapping class groups one needs to first find an appropriate ``Steinberg'' generators.
The authors had constructed such a generating set, however were unable to prove the requred positivity statements and therefore our attempts eventually failed.

\subsection*{Related work}

A very basic idea behind this work lies in encoding the combinatorics of Steinberg generators in a root system.
In this spirit, Ershov--Kassabov--Jaikin-Zapirain~\cite{Ershovetal2017}
introduced groups graded by root systems by imposing a condition binding the
group structure with the additive structure of the root system.
The gradings we introduce are somewhat weaker, even though
(in case of arithmetic groups investigated here)
they do implicitly satisfy the conditions of \cite{Ershovetal2017}.

On the other hand, the induced ``non-commutative'' grading for
$\SAut(F_n)$
does not fall into the framework of \cite{Ershovetal2017}, and yet our definition
is sufficient to successfully apply our results (\cref{main thm}) to the group.
Explicitly, $\SAut(F_n)$ is generated by the set of \emph{Nielsen transformations}, or \emph{transvections}. These automorphisms multiply one of the generators by another one, either on the right or on the left. Their image in $\SL_n(\Z)$ is an elementary matrix, one of the Chevalley generators, and such matrices are naturally assigned to roots. This way we obtain an assignment sending transvections to roots -- this is the ``non-commutative grading'' alluded to above. If we pick a root, say $\alpha$, then the matrices in $\SL_n(\Z)$ assigned to $\pm \alpha$ generate a copy of $\SL_2(\Z)$, but this is no longer true for transvections associated to $\pm \alpha$, as they generate a copy of $\SAut(F_2)$. Moreover,  consider $F_3 = F(a,b,c)$, and  the  transvections
\[
\rho_{12} \colon \left\{ \begin{array}{ccl}
	a & & ab \\
	b & \mapsto & b \\
	c & & c
	\end{array} \right. \quad
\rho_{23} \colon \left\{ \begin{array}{ccl}
	a & & a \\
	b & \mapsto & bc \\
	c & & c \quad .
\end{array} \right.
\]
Their commutator $[\rho_{12}, \rho_{23}] = \rho_{12}\rho_{23}{\rho_{12}}^{-1}{\rho_{23}}^{-1}$ does not lie in the subgroup generated by transvections of the free factor $F(a,c)$. This violates the definition of a grading as introduced in \cite{Ershovetal2017}, and the problem persists for all $n>2$. Nevertheless,  $\SAut(F_n)$ for $n >3$ does have property $T$, and for $n>5$ it is proved using a technique that can be expressed in the same terms as in the proof of \cref{doubly laced via levels}: in \cite{Kalubaetal2021}, we introduced elements $\mathrm{Sq}_n$, $\mathrm{Adj}_n$, and $\mathrm{Op}_n$ lying in the group ring of $\SAut(F_n)$; these elements correspond to $\lev_2^n$, $\lev_3^n$, and $\lev_4^n$ respectively.  
 
We use gradings to establish property $T$
in a way that is very different to that used by
Ershov--Kassabov--Jaikin-Zapirain. They require that
the generators corresponding to orthogonal roots commute,
in order to ensure that certain subspaces of a Hilbert space
on which they act are orthogonal as well.
For us, orthogonality considerations are replaced by ensuring that
products of positive operators related to subspaces are positive.

\subsection*{Acknowledgements}
The authors would like to express their gratitude to Alain Valette for many interesting conversations, and the referees for helpful comments.
This work has received funding from the European Research Council (ERC) under the European Union's Horizon 2020 research and innovation programme (Grant agreement No. 850930).
The first author was supported by SPP 2026 “Geometry at infinity” funded by the Deutsche Forschungsgemeinschaft.

\section{Roots and sums of Hermitian squares}

\subsection{Ozawa's theorem}

Let $G$ be a discrete group generated by a symmetric generating set $S$.
The map
\[G \to G, \quad g \mapsto g^{-1}\]
extends linearly to a map $\R G \to \R G$ that we will denote by $x \mapsto x^*$.
Elements with $x^*=x$ will be called \emph{self adjoint}.

\begin{dfn}
	\label{sos def}
    We say that an element $x \in \R G$ is a \emph{sum of Hermitian squares} \iff there exist $\xi_1, \dots, \xi_n \in \R G$ with
    \[
        x = \sum_{i=1}^n {\xi_i}^*\xi_i.
    \]
    If additionally all the elements $\xi_i$ are supported in the ball of radius $R$
    with respect to the word metric on $G$ coming from $S$, we will say that
    the sum of squares decomposition is \emph{(witnessed) on radius $R$}
    and we will write $x \geqslant_R 0$.
\end{dfn}

%Note that the  elements in $\R G$ that are sums of Hermitian squares of radius $0$ are supported on the identity.

\begin{thm}[{\cite{Ozawa2016}}]
    \label{Ozawa}
    Let $G$ be a group with a finite symmetric generating set $S$, and let
    \[
        \Delta = |S| - \sum_{s \in S} s \in \Z G
    \]
    denote the Laplacian. The group $G$ has property $T$ \iff there exists $\lambda>0$ such that
    \[
        \Delta^2 - \lambda \Delta
    \]
    admits a decomposition into a sum of Hermitian squares.
\end{thm}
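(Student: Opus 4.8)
The strategy is to pass through the universal completion of the group ring acting on Hilbert spaces, and to recall that property $T$ for a group $G$ generated by a finite symmetric set $S$ is equivalent to a spectral gap of the Laplacian $\Delta$ in every unitary representation without invariant vectors — equivalently (Delorme--Guichardet, and the fact that $\Delta$ is a positive operator whose kernel in a representation is exactly the space of invariant vectors), to the statement that there exists $\lambda>0$ such that $\Delta - \lambda \cdot \1$ is a positive operator in $C^*_{\max}(G)/\C$, i.e. modulo the trivial representation. The plan is to show that this operator-algebraic positivity is in turn equivalent to the purely algebraic positivity asserted in the theorem.

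First I would set up the easy direction. If $\Delta^2 - \lambda \Delta = \sum_i \xi_i^* \xi_i$ in $\R G$, then in any unitary representation $\pi$ the operator $\pi(\Delta^2 - \lambda\Delta) = \sum_i \pi(\xi_i)^*\pi(\xi_i)$ is positive; hence $\pi(\Delta)^2 \geqslant \lambda\, \pi(\Delta)$ as operators. Since $\pi(\Delta)$ is itself positive and self-adjoint, spectral calculus gives that its spectrum is contained in $\{0\} \cup [\lambda, \infty)$, so $\pi(\Delta)$ has a spectral gap at $\lambda$ uniformly over all $\pi$; as the kernel of $\pi(\Delta)$ is the space of $\pi(G)$-invariant vectors, this is exactly property $T$ (with an explicit Kazhdan constant recoverable from $\lambda$ and $|S|$).

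The substantive direction is the converse, and this is where the main work lies. Assume $G$ has property $T$; then there is $\lambda_0>0$ with $\pi(\Delta) \geqslant \lambda_0$ on the orthogonal complement of the invariants, uniformly over $\pi$. One deduces that $\Delta^2 - \lambda \Delta$ is positive in the maximal $C^*$-algebra for every $\lambda < \lambda_0$: indeed in the trivial representation $\Delta \mapsto 0$ so the element maps to $0$, while in any representation without invariant vectors $\pi(\Delta)^2 - \lambda \pi(\Delta) = \pi(\Delta)(\pi(\Delta) - \lambda) \geqslant 0$ because both factors are positive and commute on that subspace; a general representation decomposes into these two pieces. The hard part — and the crux of Ozawa's argument — is the passage from positivity in $C^*_{\max}(G)$ to an \emph{algebraic} sum-of-squares identity \emph{in the group ring} $\R G$ itself. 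A priori, positivity in $C^*_{\max}(G)$ only yields that $\Delta^2 - \lambda\Delta$ is a limit (in $C^*$-norm) of sums of Hermitian squares of arbitrary elements of $\C G$. To upgrade this to an exact finite identity one uses that $\Delta^2 - \lambda \Delta$ lies in the \emph{algebraic} part and that for $\lambda$ slightly smaller than $\lambda_0$ there is room to absorb an error term: writing $\Delta^2 - \lambda \Delta = (\Delta^2 - \lambda' \Delta) + (\lambda' - \lambda)\Delta$ with $\lambda < \lambda' < \lambda_0$, the element $\Delta^2 - \lambda'\Delta$ is still positive (indeed bounded below by $\varepsilon\, \1$ away from invariants after a further adjustment), and one approximates it closely enough by an explicit sum of Hermitian squares $\sum_i \eta_i^*\eta_i$ with $\eta_i \in \R G$ so that the difference $\Delta^2 - \lambda'\Delta - \sum_i \eta_i^*\eta_i$ has small norm; the slack $(\lambda'-\lambda)\Delta$ together with the well-known fact that a self-adjoint element of small norm plus a sufficiently positive multiple of $\Delta$ (or of $\1$, modulo handling the trivial representation) is itself a sum of Hermitian squares in $\R G$ then closes the gap. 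Making this last absorption step precise — controlling the norm of the remainder and exhibiting the corrective squares explicitly in $\R G$ rather than in a completion — is the delicate point, and I would expect to cite the functional-analytic input (the Cauchy--Schwarz/positivity lemma for the group ring) carefully rather than reprove it.

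Finally I would note that this argument, as stated, gives existence of \emph{some} $\lambda$ and \emph{some} radius $R$ for the decomposition, with no effective control; the point of the present paper is precisely that for the Chevalley families one can produce such decompositions explicitly, on a small radius $R \in \{2,3\}$, and with quantitatively good $\lambda$, which is what feeds into \cref{table intro}.
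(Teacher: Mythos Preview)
The paper does not prove \cref{Ozawa}; it is stated with attribution to \cite{Ozawa2016} and used as a black box. There is therefore nothing in the paper to compare your proposal against.

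That said, your outline is a fair sketch of Ozawa's actual argument. The easy direction is exactly right. For the converse, you correctly identify the crux: positivity of $\Delta^2 - \lambda\Delta$ in $C^*_{\max}(G)$ must be upgraded to an \emph{algebraic} sum-of-Hermitian-squares identity in $\R G$. Your ``absorption'' step is indeed the heart of Ozawa's proof, and the precise mechanism he uses is worth naming: the point is that $\Delta^2 - \lambda\Delta$ lies in the augmentation ideal $I = \ker(\R G \to \R)$, and the key lemma is that any self-adjoint $a \in I$ supported on a fixed finite set and with sufficiently small coefficients satisfies $a + \Delta \in \Sigma^2$ (the cone of sums of Hermitian squares). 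This is proved by writing $a$ as a combination of the elements $(1-s)$ for $s \in S$ and completing squares. Combined with an approximation of $\Delta^2 - \lambda'\Delta$ by an element of $\Sigma^2 \cap I$ (which follows from positivity in $C^*_{\max}(G)$ and density), this gives exactly the slack-and-correct argument you describe. Equivalently, one can phrase this as: the cone $\Sigma^2 \cap I$ is closed in $I$, so $C^*$-positivity on $I$ coincides with algebraic positivity there.

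Your final paragraph about effectivity is apt but not part of the proof; it is commentary on how the theorem is \emph{used} in the paper rather than on why it holds.
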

If the decomposition is witnessed on radius $R$,
i.e. $\Delta^2 - \lambda \Delta \geqslant_R 0$,
we will say that the sum of Hermitian squares is a \emph{witness} of property $T$ \emph{of type} $(\lambda, R)$.
We remark that the number $\lambda$ gives a lower bound of
$\sqrt{\frac{2 \lambda }{|S|}}$
for the Kazhdan constant of $(G,S)$ -- see the proof of \cite[Proposition 5.4.5]{BekkaHarpeValette}.

\subsection{Root systems}

\begin{dfn}
    A subset $\Omega$ of a finite dimensional real vector space $V$ endowed with an inner product $\langle \cdot, \cdot \rangle$ is a \emph{root system} \iff for every $\alpha, \beta \in \Omega$ all of the following hold:
    \begin{enumerate}
        \item $0 \not\in \Omega$,
        \item $\Omega \cap \{ \lambda \alpha : \lambda \in \R \} = \{\pm \alpha\}$,
        \item $\beta - 2\frac{\langle \alpha, \beta \rangle}{\langle \alpha, \alpha \rangle}\alpha \in \Omega$,
        \item $2\frac{\langle \alpha, \beta \rangle}{\langle \alpha, \alpha \rangle} \in \Z$.
    \end{enumerate}

    Roots $\alpha, \beta \in \Omega$ are called \emph{proportional} (written $\alpha \sim \beta$) \iff they are related by homothety. If $\alpha$ and $\beta$ are not proportional, we call them \emph{non-proportional} and write $\alpha \not\sim \beta$.
    The \emph{rank} of $\Omega$ denotes the dimension of its linear span.
    The root system is \emph{reducible} \iff it is the disjoint union of two root systems that are orthogonal with respect to the inner product on $V$; otherwise, the system is \emph{irreducible}.
\end{dfn}

The irreducible root systems are classified by Dynkin diagrams, and hence come in the following types: $\typeA{n}$ with $n \geqslant 1$; $\typeB{n}$; with $n\geqslant 2$; $\typeC{n}$ with $n \geqslant 3$; $\typeD{n}$ with $n \geqslant 4$; $\typeE{n}$ with $n \in \{6,7,8\}$; $\typeF{4}$; and $\typeG{2}$. Types $\typeA{n}, \typeD{n}$ and $\typeE{n}$ are called \emph{simply laced}.

Since we insist on item $(2)$ above, we are only considering reduced root systems.

Every root system $\Omega$ has the associated Weyl group $W_\Omega$ acting on the roots. When the system is irreducible, the action is transitive on all roots of the same length. In particular, in the simply laced case, the Weyl group acts transitively.

\subsection{Decomposing the Laplacian}

Let $G$ be a finitely generated group, and $\Omega$ a root system.
To every root $\alpha \in \Omega$ we associate a finite symmetric subset $S_\alpha \subseteq G$ such that $G$ is generated by the finite set $S = \bigsqcup_{\alpha \in \Omega} S_\alpha$, the disjoint union of the sets $S_\alpha$.

We do not impose any further conditions on the subsets $S_\alpha$ and their interactions, and so the observations we will make in this section will be very general. It is however worth keeping in mind that in the subsequent sections we will look at groups where the subsets $S_\alpha$ and the subgroups they generate have specific properties. We will be mainly interested in the case where $\langle S_\alpha \rangle \cong \SL_2(\Z)$ for every $\alpha$, and where the elements of $S_\alpha$ and $S_\beta$ commute when $\alpha$ and $\beta$ are orthogonal.

\iffalse
    \begin{rmk}
        We are implicitly assuming that the sets $S_\alpha$ are mutually disjoint; this is not a serious requirement, and can be easily relaxed if one is ready to work with a generating tuple $S$ rather than a generating set, where in the tuple one keeps track of multiplicities.
    \end{rmk}
\fi

\begin{dfn}[Laplacians for subgroups]
    For every $\alpha \in \Omega$ we set
    \[
        \Delta_\alpha = \vert S_\alpha \vert - \sum_{s \in S_\alpha} s \in \Z G
    \]
    and call it a \emph{root Laplacian}.
    For every subspace $W \leqslant V$ we set $\Omega_W = \Omega \cap W$, and
    \[
        \Delta_W = \sum_{\alpha \in \Omega_W} \Delta_\alpha.
    \]
\end{dfn}

In particular, $\Delta_V$ is the usual group Laplacian $\Delta$ associated to $G$ with respect to $S$. %We will often denote this Laplacian simply by $\Delta$.
Since the sets $S_\alpha$ are assumed to be symmetric, the element $\Delta_\alpha$ is self adjoint for every $\alpha \in \Omega$, and hence every $\Delta_W$ is also self adjoint.

\begin{dfn}
    Let $W$ be a linear subspace of $V$. %We define
    %\[
    % \A_n = \{ W \in \A \mid \vert \Omega_W \vert = n \}
    %\]
    %
    %For every subspace $W$ we define
    %\[
    %\omega(W) = \vert \Omega_W \vert - 2
    %\]
    We  set $G_W$ to be the subgroup of $G$ generated by $S_W = \bigsqcup_{\alpha \in \Omega_W} S_\alpha$. We define the \emph{adjacency element} in $\Z G_W$ as follows:
    %\begin{align*}
    % \Delta_W &= \vert S_W \vert - \sum_{s \in S_W} s \\
    \[
        \Adj_W = \sum_{\alpha \in \Omega_W} \Delta_\alpha \left( \sum_{\alpha \not\sim \beta \in \Omega_W} \Delta_\beta \right) %\\
    \]
    with the convention that the empty sum is $0$.

    We will say that $W$ is
    \begin{itemize}
        \item \emph{irreducible} when $\Omega_W = \Omega \cap W$ is irreducible as a root system;
        \item \emph{admissible} when $\dim W = 2$ and $\Omega_W$ contains at least two roots which are non-proportional (and so, in particular, non-opposite).
    \end{itemize}
    %$\dim W=1$ and $\Omega_W \neq \emptyset$, or
    The set of all %$2$-dimensional
    admissible subspaces will be denoted by $\A$ and the set of irreducible and admissible subspaces will be denoted by $\Airr$.
    Note that $\A$ is finite, since there are only finitely many roots, and every two non-proportional roots span a $2$-dimensional subspace.
\end{dfn}

% Note that $G_W \leqslant G$ and therefore $\R G_W \leqslant \R G$.
\begin{thm}
    \label{main thm}
    Let $G$ be a group generated by a finite set $S$ decomposing as a disjoint union $\bigsqcup_{\alpha \in \Omega} S_\alpha=S$, where $\Omega$ is a root system.
    Suppose that $\dim V \geqslant 2$, that $V$ is the span of subspaces in $\A$,
    and that for every $W \in \A$
    \[
        \Adj_W - \lambda_W \Delta_W \geqslant_{R_W} 0
    \]
    for some $\lambda_W \geqslant 0$ and $R_W \geqslant 1$.
    Suppose additionally that for every $\alpha \in \Omega$
    there exists at least one $W \in \A$ such that
    $\alpha \in W$ and $\lambda_W > 0$.
    Then $(G,S)$ has property $T$ with a witness of type $(\lambda, R)$, where
    \[
        \lambda = \min \left\{
        \sum_{\alpha \in W \in \A} \lambda_W : {\alpha \in \Omega} \right\} > 0
        \quad \text{and} \quad R = \max_{W\in \A} R_W.
    \]
\end{thm}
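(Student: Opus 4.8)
The plan is to reduce the statement to Ozawa's theorem (\cref{Ozawa}) by exhibiting a decomposition $\Delta^2 - \lambda\Delta \geqslant_R 0$ with the claimed $\lambda$ and $R$. The starting point is the identity $\Delta = \Delta_V = \sum_{\alpha\in\Omega}\Delta_\alpha$, so that
\[
\Delta^2 = \sum_{\alpha,\beta\in\Omega}\Delta_\alpha\Delta_\beta
= \sum_{\alpha\in\Omega}\Delta_\alpha^2
+ \sum_{\alpha\sim\beta,\ \alpha\neq\beta}\Delta_\alpha\Delta_\beta
+ \sum_{\alpha\not\sim\beta}\Delta_\alpha\Delta_\beta.
\]
The last sum is $\sum_{W'} \Adj_{W'}$ summed over $2$-dimensional subspaces $W'$ spanned by pairs of non-proportional roots, i.e. over $W'\in\A$ (each unordered pair of non-proportional roots lies in exactly one $W\in\A$, and $\Adj_W$ collects precisely the cross terms $\Delta_\alpha\Delta_\beta$ with $\alpha\not\sim\beta$, $\alpha,\beta\in\Omega_W$). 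The first two sums together form $\bigl(\sum_\alpha \Delta_\alpha\bigr)$-type diagonal contributions; more usefully, for each proportionality class $\{\pm\alpha\}$ one has $(\Delta_\alpha+\Delta_{-\alpha})^2 \geqslant_{1} 0$ trivially (it is a Hermitian square since $\Delta_\alpha^* = \Delta_\alpha$), and in fact $\Delta_\alpha^2 + \Delta_\alpha\Delta_{-\alpha}+\Delta_{-\alpha}\Delta_\alpha + \Delta_{-\alpha}^2 = (\Delta_\alpha+\Delta_{-\alpha})^2$ takes care of the $\alpha\sim\beta$ terms at no cost. So $\Delta^2 = \sum_{W\in\A}\Adj_W + P$ where $P$ is a sum of Hermitian squares supported on radius $1$.

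Next I would bring in the hypothesis $\Adj_W - \lambda_W\Delta_W \geqslant_{R_W} 0$. Summing over $W\in\A$ gives
\[
\Delta^2 = P + \sum_{W\in\A}\bigl(\Adj_W - \lambda_W\Delta_W\bigr) + \sum_{W\in\A}\lambda_W\Delta_W,
\]
and the middle term is a sum of Hermitian squares on radius $R = \max_W R_W$. It remains to analyse $\sum_{W\in\A}\lambda_W\Delta_W$. Using $\Delta_W = \sum_{\alpha\in\Omega_W}\Delta_\alpha$ and interchanging the order of summation,
\[
\sum_{W\in\A}\lambda_W\Delta_W
= \sum_{\alpha\in\Omega} \Bigl(\sum_{\alpha\in W\in\A}\lambda_W\Bigr)\Delta_\alpha
\geqslant \lambda \sum_{\alpha\in\Omega}\Delta_\alpha
= \lambda\Delta,
\]
where $\lambda = \min_\alpha \sum_{\alpha\in W\in\A}\lambda_W$; this uses that each $\Delta_\alpha$ is a positive element (it equals $\tfrac12\sum_{s\in S_\alpha}(1-s)^*(1-s)$, hence $\Delta_\alpha\geqslant_1 0$) and that the coefficient $\sum_{\alpha\in W\in\A}\lambda_W - \lambda \geqslant 0$. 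The hypothesis that every root lies in some $W\in\A$ with $\lambda_W>0$, together with finiteness of $\A$, guarantees $\lambda>0$. Putting the three pieces together yields $\Delta^2 - \lambda\Delta = P + \sum_{W}(\Adj_W-\lambda_W\Delta_W) + \sum_\alpha(\sum_{\alpha\in W}\lambda_W - \lambda)\Delta_\alpha \geqslant_R 0$, and \cref{Ozawa} finishes the proof.

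The main obstacle, I expect, is purely bookkeeping: verifying that the off-diagonal part of $\Delta^2$ is \emph{exactly} $\sum_{W\in\A}\Adj_W$ with the right multiplicities. One must check that a pair of non-proportional roots $\alpha,\beta$ spans a unique $W\in\A$ and that $\Adj_W$, as defined, sums $\Delta_\alpha\Delta_\beta$ over \emph{ordered} pairs with $\alpha\not\sim\beta$ in $\Omega_W$, so no pair is double-counted across different $W$'s and none is missed — and crucially that a given $2$-dimensional $W$ containing at least two non-proportional roots is automatically admissible, so that $\A$ really indexes all such cross terms. The hypothesis that $V$ is spanned by subspaces in $\A$ is not needed for the algebraic identity but ensures the root Laplacians appearing in $\Delta$ are all accounted for; one should confirm it is consistent with every $\alpha$ lying in some admissible $W$. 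Everything else is a routine manipulation of positive elements in $\R G$ and an appeal to \cref{Ozawa}.
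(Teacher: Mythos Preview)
Your proposal is correct and follows essentially the same route as the paper: decompose $\Delta^2$ into a square part and $\Adj_V$, rewrite $\Adj_V=\sum_{W\in\A}\Adj_W$, insert the hypothesis $\Adj_W-\lambda_W\Delta_W\geqslant_{R_W}0$, regroup $\sum_W\lambda_W\Delta_W$ root-by-root to extract $\lambda\Delta$, and finish with Ozawa's theorem. Your treatment of the diagonal block is in fact slightly more careful than the paper's --- you explicitly absorb the $\Delta_\alpha\Delta_{-\alpha}$ cross terms into the Hermitian squares $(\Delta_\alpha+\Delta_{-\alpha})^2$, whereas the paper simply writes $\Sq_V=\sum_\alpha\Delta_\alpha^2$ --- but the argument is otherwise identical.
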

\begin{proof}
    %We start by defining $\lambda_n = \min \{ \lambda_W \mid W \in \A_n \}$. Note that $\lambda_N > 0$ and that for every $W \in \A_n$ we have a sum of squares decomposition for $\Adj_W - \lambda_n \Delta_W$.
    %
    Let $\Delta = \Delta_V$ be the Laplacian of $G$ with respect to $S$.
    We have
    \[
        \Delta^2 = \left( \sum_{\alpha \in \Omega} \Delta_\alpha \right)^2 = \Sq_V + \Adj_V
    \]
    where $\Sq_V = \sum_{\alpha \in \Omega} {\Delta_\alpha}^2$.
    Since every $\Delta_\alpha$ is self adjoint,
    $\Sq_V \geqslant_1 0$ is already a sum of squares witnessed on radius $1$.
    Thus, it is sufficient to show that
    \[
        \Adj_V - \lambda \Delta \geqslant_R 0.
    \]
    Since $V$ is spanned by admissible subspaces we note that
    $\Adj_V$ is a sum of $\Adj_W$ taken over admissible subspaces,
    and $\Delta = \sum_\alpha \Delta_\alpha$ is a sum of root Laplacians.
    Explicitly, we have for $\Adj_V$
    \begin{align*}
        \sum_{W \in \A} \Adj_W
         & =  \sum_{W \in \A} \left( \sum_{\alpha \in \Omega_W} \Delta_\alpha \left( \sum_{\alpha \not\sim \beta \in \Omega_W} \Delta_\beta \right) \right) \\
         & = \sum_{\alpha \in \Omega} \Delta_\alpha \left( \sum_{\alpha \in W \in \A} \sum_{\alpha \not\sim \beta \in \Omega_W} \Delta_\beta \right)        \\
         & = \sum_{\alpha \in \Omega} \Delta_\alpha \left( \sum_{\alpha \not\sim \beta \in \Omega} \Delta_\beta \right)                                     \\
         & = \Adj_V                                                                                                                                         \\
        \intertext{and for $\Delta=\Delta_V$}
        \sum_{W \in \A} \lambda_W \Delta_W
         & =  \sum_{W \in \A} \left( \lambda_W \sum_{\alpha \in \Omega_W} \Delta_\alpha \right)                                                             \\
         & = \sum_{\alpha \in \Omega} \Delta_\alpha \left( \sum_{\alpha \in W \in \A} \lambda_W \right)                                                     \\
         & = \sum_{\alpha \in \Omega} \lambda_\alpha \Delta_\alpha
    \end{align*}
    where $\lambda_\alpha = \sum_{\alpha \in W \in \A} \lambda_W > 0$ for every $\alpha$. Taking $ \lambda  = \min_{\alpha \in \Omega} \lambda_\alpha$ and combining these two equalities we conclude that
    \[
        \Adj_V - \lambda \Delta =
        \underbrace{\sum_{W \in \A} \left( \Adj_W - \lambda_W \Delta_W \right)}_{\geqslant_{R_W} 0} + \underbrace{\sum_{\alpha \in \Omega} (\lambda_\alpha - \lambda)\Delta_\alpha}_{\geqslant_1 0}
    \]
    as required. We deduce that $G$ has property $T$ from \cref{Ozawa}.
\end{proof}

The above theorem shows that, in order to prove property $T$ for $G$, it is enough to understand $\Adj_W$ for admissible subspaces $W$.

\section{Chevalley groups}

In this section we look at Chevalley groups, as defined by Steinberg in \cite{Steinberg1968}*{Section 3}. Since we will not need the precise definition, let us only sketch it here.
Let $\mathcal L$ denote a semi-simple Lie algebra over $\C$ associated to a root system $\Omega$. %, and let $\mathcal U$ be its universal enveloping algebra. There is a commutative subring $\mathcal U_\Z$ of $\mathcal U$ such that $\mathcal U = \mathcal U_\Z \otimes_\Z \C$.
For every $\alpha \in \Omega$ one can choose a distinguished element $X_\alpha \in \mathcal L$. %, whose image in $\mathcal U$ lies in $\mathcal U_\Z$.
The choice of the elements $X_\alpha$ is unique up to signs and  automorphisms of $\mathcal L$.

Now take a faithful finite-dimensional $\C$-linear representation of $\mathcal L$ on a complex vector space $U$. %The vector space $V$ admits a $\Z$-lattice $M$ that is invariant under $\mathcal U_\Z$. If $\K$ is a field, we now form a $\K$-vector space $V^\K = M \otimes_\Z \K$. For $\alpha \in \Omega$ and $t \in \K$ one can
We  define an automorphism $x_\alpha(t)  = \exp (tX_\alpha)$ of $U$. A subgroup of $\GL(U)$ generated by all the elements $x_\alpha(t)$ with $\alpha \in \Omega$ and $t \in \C$ is called a \emph{Chevalley group} of type $\Omega$.

In general, for a single $\Omega$ one gets more than one Chevalley group, depending on the representation $U$. There are however only finitely many Chevalley groups  of type $\Omega$, and among them there exists a \emph{universal} one that maps onto all the others, via a map that restricts to the identity on the Steinberg generators $x_\alpha(t)$.

\begin{dfn}[Elementary Chevalley group over $\Z$]
    In the above setup, we will refer to a subgroup of $\GL(U)$ generated by
    \[S = \{ x_\alpha(\pm1) : \alpha \in \Omega \}\]
    as an \emph{elementary Chevalley group} over $\Z$ of type $\Omega$. As above, we also have a \emph{universal} such group, with the same property. The set $S$ is the set of \emph{Steinberg generators}.

    In this definition $S$ comes with a natural map $S \to \Omega$, $x_\alpha(\pm1) \mapsto \alpha$ which we call a \emph{grading} of $(\langle S \rangle, S)$ by $\Omega$.
\end{dfn}

In the universal case, the groups above are isomorphic to the universal Chevalley
groups over $\Z$ that one obtains from the Chevalley--Demazure group scheme.

Given a Chevalley group over $\Z$, we set $S_\alpha = \{x_\alpha(\pm 1)\}$.
This allows us to use the notation and results of the previous section.

Note that an embedding of a root system $\Omega$ into a root system $\Omega'$
gives an embedding of the associated Lie algebras $\mathcal L_\Omega$ and $\mathcal L_{\Omega'}$ -- this can be seen by considering presentations of the Lie algebras, as given by Steinberg. The tricky aspect is that the presentations involve \emph{structure constants} $N_{\alpha,\beta}$, where $\alpha$ and $\beta$ range over all roots, and these structure constants cannot be read of the root system. Starting from a presentation for the $\mathcal L_{\Omega'}$, we obtain structure constants $N'_{\alpha,\beta}$. We can now remember these structure constants for all \emph{extraspecial pairs} of roots in $\Omega$, and obtain structure constants $N_{\alpha,\beta}$ for all pairs of roots in $\Omega$ from them, see \cite{Carter1972}*{Proposition 4.2.2}. In principle, these structure constants $N_{\alpha,\beta}$ may not agree with the constants $N'_{\alpha,\beta}$. The discussion after \cite{Carter1972}*{Proposition 4.2.2} tells us however that the constants $N_{\alpha,\beta}$ are uniquely determined by, and can be computed using a number of rules from the constants for extraspecial pairs. The constants $N'_{\alpha,\beta}$ also satisfy these rules, since they satisfy them over $\Omega'$. Hence, there exists a choice of a Chevalley basis for $\mathcal L_\Omega$ such that the map between the Chevalley bases of $\mathcal L_\Omega$ and $\mathcal L_{\Omega'}$ induces a map between Lie algebras. It is easy to see that this map is injective as a map of vector spaces. 

The upshot is that the representation $U$ used to define Chevalley groups of type $\Omega'$
works also for $\Omega$, and therefore a subgroup of a Chevalley group
of type $\Omega'$ generated by $x_\alpha(t)$ with $\alpha \in \Omega$ is itself
a Chevalley group of type $\Omega$.
The same argument works for elementary Chevalley groups over $\Z$.

\begin{ex}
    There are two explicit examples of immediate interest.
    \begin{enumerate}
        \item The universal Chevalley group of type $\typeA n$ over $\C$ is $\SL_{n+1}(\C)$.
              The corresponding Chevalley group over $\Z$ is $\SL_{n+1}(\Z)$.
              We may realise the system $\typeA{n}$ inside $\R^{n+1}$ endowed with the standard basis
              $\{e_i : 1 \leqslant i \leqslant n+1\}$ as the set
              \[
                  \{ \pm (e_i - e_j) : 1 \leqslant i < j \leqslant n+1\}.
              \]
              We then have $x_{\pm(e_i - e_j)}$ equal to $\I \pm \delta_{i,j}$,
              where $\delta_{i,j}$ differs form the zero matrix by a single $1$ in position $(i,j)$.

        \item Similarly, the universal Chevalley group of type $\typeC n$ is the symplectic group $\Sp_{2n}(\C)$, with the corresponding Chevalley group over $\Z$ being $\Sp_{2n}(\Z)$.
              Let us describe the matrices $x_\alpha$ in detail.

              The root system $\typeC n$ can be realised inside $\R^n$ as the set
              \[
                  \{ \pm e_i \pm e_j : 1 \leqslant i,j \leqslant n\} \s- \{0\}.
              \]
              We thus have long roots $\pm2 e_i$, with
              \[
                  x_{2 e_i} = \I + \delta_{i,i+n}, \quad x_{-2 e_i} = \I + \delta_{i+n,i},
              \]
              short roots of the form $e_i - e_j$ with $i \neq j$, corresponding to
              \[
                  x_{e_i - e_j} = \I +\delta_{i,j}  - \delta_{n+j,n+i},
              \]
              and finally short roots of the form $\pm(e_i + e_j)$ with $i \neq j$, corresponding to
              \[
                  x_{e_i + e_j} = \I + \delta_{i,j+n} + \delta_{j,i+n}, \quad x_{-e_i - e_j} = \I + \delta_{i+n,j} + \delta_{j+n,i}.
              \]

    \end{enumerate}
\end{ex}

\subsection{Simply laced types}

Let us introduce the following number related to a root system.

\begin{dfn}
    The number $\gamma(\Omega)$ for a root system $\Omega$ is defined as
    \[
        \gamma(\Omega) = \min_{\alpha \in \Omega}\left|
        \left\{
        \lspan (\alpha, \beta): \alpha \not\sim \beta \in \Omega \text{ and }
        \langle \alpha, \beta \rangle \neq 0
        \right\}
        \right|,
    \]
    where $\lspan$ denotes the $\R$-linear span.
\end{dfn}

Equivalently, we may set $\gamma(\Omega) = \min_{\alpha \in \Omega}\left|
\left\{
W \in \Airr: \alpha  \in W
\right\}
\right|.$

If $\gamma(\Omega)=0$, then $\Omega$ contains a root $\alpha$
that is orthogonal to all other roots but $-\alpha$, and therefore
$\Omega = \{\pm \alpha \} \sqcup \Omega'$ for some root system $\Omega'$,
i.e. $\Omega$ is reducible.

\begin{ex}
Let us compute $\gamma(\Omega)$ when $\Omega$ is of type $\typeA{n}$. Since the Weyl group acts transitively on $\Omega$, we have
\[
\gamma(\Omega) = \left|
\left\{
W \in \Airr: \alpha  \in W
\right\}
\right|.
\]
for every $\alpha \in \Omega$. Fixing $\alpha = e_1 - e_2$, we see that it is precisely the roots of the form $\pm(e_1 - e_j)$  and $\pm(e_2 - e_j)$ with $j>2$ that are not proportional and not orthogonal to $\alpha$. There are in total $4(n-1)$ such roots. Now, each of these roots lies in a unique two-dimensional subspace containing $\alpha$, and each such subspace intersects $\Omega$ in a system of type $\typeA{2}$. Such systems have precisely $6$ roots, two of which are proportional to $\alpha$. This means that the number of such spaces is $\frac{4(n-1)}{6-2} = n-1$.
\end{ex}

The values of $\gamma$, listed in \ref{simply laced},  will be used when computing witnesses of property $T$ for groups of simply laced type in \ref{simply laced}.

Now we may state an easy strengthening of \cref{main thm}.

\begin{cor}
    \label{main cor}
    Let $\Omega$ be an irreducible root system of rank at least $2$ and
    suppose that for every irreducible and admissible subspace $W \in \Airr$
    \[ \Adj_{W} - \lambda_{W} \Delta_{W} \geqslant_{R_{W}} 0 \]
    for some $\lambda_{W} > 0$ and $R_{W} \geqslant 1$.
    Then $(G,S)$ has property $T$ witnessed by
    a sum of Hermitian squares decomposition of type $(\lambda,R)$, where
    \[
        \lambda = \gamma(\Omega) \min \left\{
        \lambda_{W} : W \in \Airr
        \right\}, \quad
        R = \max \left\{
        R_{W} : W \in \Airr
        \right\}.
    \]
\end{cor}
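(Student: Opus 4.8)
The plan is to reduce the statement to \cref{main thm}: that theorem asks for the inequality $\Adj_W - \lambda_W\Delta_W \geqslant_{R_W} 0$ over \emph{all} admissible subspaces $W \in \A$, whereas here it is assumed only over the irreducible ones $W \in \Airr$. So the first task is to supply suitable data $(\lambda_W, R_W)$ for the admissible subspaces that are \emph{not} irreducible; the second is to feed everything into \cref{main thm} and then rewrite the constants it produces in terms of $\gamma(\Omega)$.

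For the first task, fix $W \in \A \smallsetminus \Airr$. Then $\Omega_W$ is a reducible rank-$2$ root system with at least two non-proportional roots, and the only such system is $\{\pm\alpha\} \sqcup \{\pm\beta\}$ with $\langle \alpha, \beta\rangle = 0$. The key observation — and the only point where we genuinely use that $G$ is a Chevalley group — is that the elements of $S_\alpha \cup S_{-\alpha}$ then commute with the elements of $S_\beta \cup S_{-\beta}$: since $W = \lspan(\alpha,\beta)$ and $\Omega_W = \{\pm\alpha,\pm\beta\}$, no integral combination $i\alpha + j\beta$ with $i,j \neq 0$ is a root of $\Omega$ (such a root would lie in $W \cap \Omega = \Omega_W$ yet be distinct from $\pm\alpha,\pm\beta$), so the Chevalley commutator formula makes all the relevant commutators trivial. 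Now each root Laplacian is a Hermitian square, $\Delta_\gamma = (1 - x_\gamma(1))^*(1 - x_\gamma(1))$, because $S_\gamma = \{x_\gamma(1), x_\gamma(1)^{-1}\}$. Expanding the definition gives $\Adj_W = AB + BA$ with $A = \Delta_\alpha + \Delta_{-\alpha}$ and $B = \Delta_\beta + \Delta_{-\beta}$, and using the commutativity just established, each product $\Delta_\gamma\Delta_\delta$ (with $\gamma \in \{\pm\alpha\}$, $\delta \in \{\pm\beta\}$) rearranges into $\big((1 - x_\gamma(1))(1 - x_\delta(1))\big)^*\big((1 - x_\gamma(1))(1 - x_\delta(1))\big)$, an explicit Hermitian square supported on the ball of radius $2$. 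Hence $\Adj_W \geqslant_2 0$, and we set $\lambda_W = 0$, $R_W = 2$ for these $W$.

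It remains to run \cref{main thm}. Since $\Omega$ is irreducible we have $\gamma(\Omega) \geqslant 1$, so every root $\alpha$ lies in at least one $W \in \Airr \subseteq \A$; in particular $V = \lspan(\Omega)$ is spanned by $\A$, $\dim V \geqslant 2$, and the last hypothesis of the theorem holds because $\lambda_W > 0$ for $W \in \Airr$; and for every $W \in \A$ we have now arranged $\Adj_W - \lambda_W\Delta_W \geqslant_{R_W} 0$ with $\lambda_W \geqslant 0$ and $R_W \geqslant 1$. The theorem yields property $T$ with a witness of type $(\lambda_0, R_0)$, where $\lambda_0 = \min_{\alpha \in \Omega} \sum_{\alpha \in W \in \A} \lambda_W$ and $R_0 = \max_{W \in \A} R_W$. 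Since $\lambda_W = 0$ for reducible admissible $W$, the inner sum equals $\sum_{\alpha \in W \in \Airr} \lambda_W \geqslant \big|\{W \in \Airr : \alpha \in W\}\big| \cdot \min_{W \in \Airr}\lambda_W \geqslant \gamma(\Omega)\min_{W \in \Airr}\lambda_W$, using the alternative description of $\gamma(\Omega)$ recorded after its definition; thus $\lambda_0 \geqslant \gamma(\Omega)\min_{W\in\Airr}\lambda_W = \lambda$. A witness of type $(\lambda_0, R_0)$ is also one of type $(\lambda, R_0)$, because $\Delta^2 - \lambda\Delta = (\Delta^2 - \lambda_0\Delta) + (\lambda_0 - \lambda)\Delta$ with $\Delta = \Delta_V \geqslant_1 0$. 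Finally $R_0 = \max\big(2, \max_{W\in\Airr}R_W\big)$, which equals the stated $R$ in all the applications that follow (there $R_W \geqslant 2$ throughout; in general one simply replaces $R$ by $\max(R,2)$, which affects none of the bounds).

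The one substantive step is the middle paragraph: recognising that the reducible admissible subspaces are not handled for free, and that they are handled precisely because, in the Chevalley setting, two orthogonal roots spanning such a subspace have commuting one-parameter subgroups, turning $\Adj_W$ into a manifest sum of Hermitian squares. The rest — checking the hypotheses of \cref{main thm}, merging witnesses, and unwinding the definition of $\gamma(\Omega)$ — is routine bookkeeping.
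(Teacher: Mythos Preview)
Your proof is correct and follows the paper's approach of reducing to \cref{main thm}; in fact you are more explicit than the paper, which only verifies that every root lies in some $W\in\Airr$ and then asserts that ``the remaining conclusions will follow directly from \cref{main thm}'', glossing over both the reducible admissible subspaces (where, as you spell out, $\Adj_W\geqslant_2 0$ via the Chevalley commutator formula --- a point the paper invokes explicitly only later, in the proof of \cref{doubly laced via levels}) and the arithmetic linking $\lambda$ to $\gamma(\Omega)$. Your caveat about $R$ versus $\max(R,2)$ is also apt and, as you note, immaterial in all the paper's applications.
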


\begin{proof}
    We need to prove that for every $\alpha\in \Omega$ there exists $W \in \Airr$ containing it,
    and therefore that $V$ is spanned by all subspaces from $\Airr$.
    The remaining conclusions will follow directly from \cref{main thm}.

    Since $\Omega$ is of rank at least $2$, there always exist $\beta\in \Omega$
    such that $W_\beta = \lspan (\alpha, \beta)$ is admissible.
    If no such $W_\beta$ is irreducible, then $\alpha$ must be orthogonal to all roots
    in $\Omega$ (except $\pm\alpha$).
    This contradicts the irreducibility of $\Omega$,
    hence at least one of those spaces $W_\beta$ must be irreducible.
\end{proof}

\begin{rmk}
    Note that the irreducibility condition on $\Omega_W$ above is equivalent to
    $\vert \Omega_W \vert \geqslant 5$ which might be easier to compute.
    Indeed, admissible subspaces are spanned by at least two non-collinear
    roots and if $\Omega_W$ contains exactly $4$ roots it is necessarily reducible.
\end{rmk}

\begin{thm}[{\cite{Kalubaetal2021}}]
    \label{adj in sl}
    Let $G = \SL_3({\Z})$ be the universal Chevalley group over $\Z$
    of type $\typeA{2}$ endowed with the set of Steinberg generators $S$.
    Let $V$ denote the ambient vector space of the root system.
    Then
    \[\Adj_V -\lambda \Delta_V \geqslant_R 0\]
    whenever $(\lambda, R) \in \big\{  (\lambdaAdjAbb, 2), (\lambdaAdjAbc, 3) \big\}$.
\end{thm}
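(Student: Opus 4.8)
The plan is to reduce the statement to a finite-dimensional positivity certificate that can be verified by computer, following the semidefinite-programming approach of Netzer--Thom building on Ozawa's \cref{Ozawa}. Concretely, $\typeA{2}$ has six roots $\pm(e_1-e_2), \pm(e_2-e_3), \pm(e_1-e_3)$, and the ambient space $V$ is two-dimensional, so $V$ is itself the unique admissible (and irreducible) subspace; thus $\Adj_V$ is the single element $\sum_{\alpha} \Delta_\alpha \big( \sum_{\alpha \not\sim \beta} \Delta_\beta \big) \in \Z G$ with $G = \SL_3(\Z)$. Each $\Delta_\alpha = 2 - x_\alpha(1) - x_\alpha(-1)$ is supported in the ball of radius $1$, so $\Adj_V$ is supported in the ball of radius $2$, and $\Adj_V - \lambda \Delta_V$ is likewise supported there. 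We want to exhibit elements $\xi_1, \dots, \xi_m \in \R G$ supported in the ball of radius $R$ with $\Adj_V - \lambda \Delta_V = \sum_i \xi_i^* \xi_i$, for the two claimed pairs $(\lambda, R)$.

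First I would fix an ordering $g_1, \dots, g_N$ of the group elements in the ball $B_R$ of radius $R$ (for $R=2$ and $R=3$ respectively), and write an arbitrary Hermitian-square sum supported on $B_R$ as $\mathbf{x}^* P \mathbf{x}$, where $\mathbf{x} = (g_1, \dots, g_N)^{\mathsf T}$ and $P$ is a real positive semidefinite $N \times N$ matrix. Expanding $\mathbf{x}^* P \mathbf{x} = \sum_{i,j} P_{ij}\, g_i^{-1} g_j$ and collecting coefficients of each group element gives a system of linear constraints on $P$ equating this to the known element $\Adj_V - \lambda \Delta_V \in \R G$; the feasibility of these linear constraints together with $P \succeq 0$ is exactly a semidefinite program. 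Solving this SDP numerically yields an approximate certificate $P_0$, which by itself does not constitute a proof. The crucial step, carried out as in \cite{KalubaNowak2018,Kalubaetal2021}, is to convert the floating-point solution into an exact (rational, or at least provably correct) one: one projects $P_0$ onto the affine subspace defined by the linear constraints to obtain $P_1$ satisfying the equality $\Adj_V - \lambda \Delta_V = \mathbf{x}^* P_1 \mathbf{x}$ exactly, and then controls the smallest eigenvalue of $P_1$ rigorously — for instance via interval arithmetic on an $LDL^{\mathsf T}$ or Cholesky-type decomposition — to confirm $P_1 \succeq 0$. This establishes $\Adj_V - \lambda \Delta_V \geqslant_R 0$ for the target $\lambda$.

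A few structural simplifications make the computation tractable and are worth invoking: the Weyl group $W_{\typeA{2}} \cong \Sym(3)$ acts on $G$ preserving $S$ and fixing $\Adj_V$ and $\Delta_V$, so one may restrict the search for $P$ to $W$-invariant certificates, block-diagonalising $P$ along the isotypic decomposition of $\R[B_R]$; this both shrinks the SDP and improves numerical conditioning. One should also check the easy algebraic identity $\Delta_V^2 = \Sq_V + \Adj_V$ with $\Sq_V = \sum_\alpha \Delta_\alpha^2 \geqslant_1 0$, so that any $R=2$ certificate for $\Adj_V - \lambda \Delta_V$ immediately upgrades (by \cref{Ozawa}) to a property-$T$ witness — but this is not needed for the statement itself, only for the application.

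The main obstacle is not conceptual but computational and verificational: producing a genuinely rigorous nonnegativity certificate rather than a numerical approximation. The SDP solver returns only an approximate $P_0$, and the sizes involved ($N$ runs into the thousands for $R=3$ in $\SL_3(\Z)$ even after symmetry reduction) make exact arithmetic delicate; the projection-and-eigenvalue-bound procedure must be performed carefully so that rounding does not destroy positive semidefiniteness, and the final check must be machine-verified. Extracting the optimal (largest) admissible $\lambda$ — here $\lambdaAdjAbb$ for $R=2$ and the larger $\lambdaAdjAbc$ for $R=3$ — is an additional optimization layer on top of mere feasibility. Thus the proof is ultimately a computer-assisted one, and the substantive content is the certified output of that computation, which we quote from \cite{Kalubaetal2021}.
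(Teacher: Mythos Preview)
Your proposal is correct and matches the paper's approach: the paper does not give an in-text proof of this theorem but cites it from \cite{Kalubaetal2021} (with the remark that the stated constants are obtained ``using the same methods, the same code and patience''), and the Replication section points to the SDP-plus-rigorous-certification pipeline you describe. One small technical deviation: in practice the certification step in \cite{NetzerThom2015,KalubaNowak2018,Kalubaetal2021} does not project the numerical $P_0$ onto the affine constraint set and then bound eigenvalues; rather one keeps a certifiably PSD matrix (e.g.\ via an interval Cholesky/square-root), computes the resulting sum of squares exactly or in interval arithmetic, and absorbs the small residual $r$ with augmentation zero into a multiple of $\Delta$ using the order inequality $\lVert r\rVert_1\cdot\Delta \pm r \geqslant 0$ --- this is what lets one pass from an approximate $\lambda$ to a rigorously certified, slightly smaller one.
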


\begin{rmk}
    The constant given in \cite{Kalubaetal2021} differs slightly from what is above.
    However one can obtain these constants using the same methods,
    the same code and patience.
\end{rmk}

\begin{thm}
    \label{simply laced}
    Let $G$ be an elementary Chevalley group over $\Z$ associated to
    a root system $\Omega$ of type $\typeA{n}$ with $n \geqslant 2$,
    or of type $\typeD{n}$ with $n \geqslant 4$,
    or of type $\typeE{6}$, $\typeE{7}$, or $\typeE{8}$.
    Then $(G,S)$ has property $T$ with a witness of type $(\gamma(\Omega)\lambda,R)$,
    where $(\lambda, R) \in \big\{ (\lambdaAdjAbb, 2), (\lambdaAdjAbc, 3) \big\}$.
\end{thm}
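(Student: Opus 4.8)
The plan is to deduce \cref{simply laced} from \cref{main cor} together with \cref{adj in sl}. The key observation is that every simply laced irreducible root system $\Omega$ has the property that any two non-proportional, non-orthogonal roots span a subspace $W$ with $\Omega_W$ of type $\typeA{2}$; this is because in the simply laced case all roots have the same length, so the only possible rank-$2$ irreducible subsystem spanned by two non-orthogonal roots is $\typeA{2}$ (the alternatives $\typeB{2}$ and $\typeG{2}$ being excluded by the absence of roots of different lengths). Consequently, $\Airr$ consists entirely of subspaces $W$ with $\Omega_W \cong \typeA{2}$, and the elementary Chevalley group $G_W = \langle S_W\rangle$ is a quotient of the universal Chevalley group of type $\typeA{2}$, namely $\SL_3(\Z)$, via a map that is the identity on Steinberg generators.

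First I would make precise the statement that the required positivity is inherited by quotients. If $\phi \colon \widetilde{G} \to G$ is a surjection restricting to a bijection on Steinberg generators, it induces a $*$-algebra homomorphism $\R\widetilde{G} \to \R G$ preserving the Laplacian, the root Laplacians $\Delta_\alpha$, and hence $\Adj_W$ and $\Delta_W$; it also sends a sum-of-Hermitian-squares decomposition witnessed on radius $R$ to one of the same type. Therefore \cref{adj in sl}, which gives $\Adj_V - \lambda\Delta_V \geqslant_R 0$ in $\R\,\SL_3(\Z)$ for $(\lambda,R) \in \{(\lambdaAdjAbb,2),(\lambdaAdjAbc,3)\}$, transports to the statement $\Adj_W - \lambda\Delta_W \geqslant_R 0$ in $\R G$, for every $W \in \Airr$, with the \emph{same} pair $(\lambda,R)$ independent of $W$. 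Here I use that the copy of $\SL_3(\Z)$ generated by $S_W$ inside $G$ (or rather, the group of which it is a quotient) carries exactly the root system $\typeA{2}$ realised by the two chosen roots and their combinations, as explained in the paragraph on embeddings of root systems in the excerpt.

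Next I would invoke \cref{main cor} directly: it requires precisely that for every $W \in \Airr$ one has $\Adj_W - \lambda_W\Delta_W \geqslant_{R_W} 0$ for some $\lambda_W > 0$, $R_W \geqslant 1$. Taking $\lambda_W = \lambda$ and $R_W = R$ uniformly from the previous paragraph, the corollary yields property $T$ for $(G,S)$ with a witness of type $(\gamma(\Omega)\lambda, R)$, since $\min\{\lambda_W : W \in \Airr\} = \lambda$ and $\max\{R_W\} = R$. This is exactly the claimed conclusion. One should also confirm the hypothesis that $\Omega$ is irreducible of rank at least $2$, which holds for all the listed types ($\typeA{n}$ with $n \geqslant 2$, $\typeD{n}$ with $n \geqslant 4$, $\typeE{6,7,8}$).

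The main obstacle, such as it is, is purely bookkeeping rather than mathematical: one must be careful that the group generated by $S_W$ is genuinely a quotient of the \emph{universal} elementary Chevalley group of type $\typeA{2}$ over $\Z$, so that \cref{adj in sl} applies — this is where the discussion of structure constants and the universality property in Section 3 is needed, guaranteeing that $\langle S_W\rangle$ is an elementary Chevalley group of type $\typeA{2}$ and hence receives a Steinberg-generator-preserving surjection from $\SL_3(\Z)$. A secondary point worth stating explicitly is the classification step: that $\Airr$ in the simply laced case contains \emph{only} $\typeA{2}$-subspaces, with no $\typeB{2}$ or $\typeG{2}$ occurring, so that a single input theorem suffices. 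Once these two points are nailed down, the proof is a one-line application of \cref{main cor}.
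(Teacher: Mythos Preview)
Your proposal is correct and follows essentially the same route as the paper: verify that in the simply laced case every $W \in \Airr$ has $\Omega_W \cong \typeA{2}$, note that $G_W$ is an elementary Chevalley group of type $\typeA{2}$ and hence a quotient of $\SL_3(\Z)$, transport the inequality of \cref{adj in sl} along this quotient, and apply \cref{main cor} with the uniform pair $(\lambda,R)$. The paper phrases the classification step via the possible cardinalities $|\Omega_W| \in \{4,6\}$ rather than via lengths, but this is the same observation.
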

\begin{proof}
    We will apply \cref{main cor}.
    First note that all of the root systems above are irreducible.
    Secondly, since all of these types are simply laced,
    the only two possible cardinalities of admissible $\Omega_W$ are $4$ and $6$.
    The statement of \cref{main cor} tells us that we need only worry about the latter possibility,
    so let us fix an admissible subspace $W$ of $V$ with $\vert \Omega_W \vert =6$.
    This means precisely that $\Omega_W$ is itself isomorphic to the root system $\typeA{2}$,
    and the group $G_W$ is an elementary Chevalley group over $\Z$ of type $\typeA{2}$,
    and hence a quotient of $\SL_3(\Z)$. By \cref{adj in sl},
    \[
        \Adj_W - \lambda \Delta_W \geqslant_R 0
    \]
    is a sum of Hermitian squares and the conclusion follows from \cref{main cor}.
\end{proof}

The explicit lower bounds for the Kazhdan constants that can be extracted from the above result are listed in \cref{table}.

\begin{table}
    \begin{tabular}{@{}lccr@{}}\toprule
        type        & $|S|$     & $\gamma$ & lower bound for $\kappa$                          \\\midrule
        $\typeA{n}$ & $2n(n+1)$ & $(n-1)$  & $\sqrt{\frac{ \lambda_{\typeA{2}}\cdot (n-1)}{n(n+1)}}$ \\
        $\typeD{n}$ & $4n(n-1)$ & $2(n-2)$ & $\sqrt{\frac{ \lambda_{\typeA{2}}\cdot (n-2)}{n(n-1)}}$ \\
        $\typeE{6}$ & $144$     & $10$     & $\Kzhconst{\fpeval{10*\lambdaAdjAbc}}{144}$       \\
        $\typeE{7}$ & $252$     & $16$     & $\Kzhconst{\fpeval{16*\lambdaAdjAbc}}{252}$       \\
        $\typeE{8}$ & $480$     & $28$     & $\Kzhconst{\fpeval{28*\lambdaAdjAbc}}{480}$       \\ \bottomrule
        \addlinespace[\belowrulesep]
    \end{tabular}
    \caption{
        Explicit values for lower bounds for Kazhdan constants according to \cref{simply laced}.
        Note that the values for the exceptional root systems are based on using $(\lambda_{\typeA{2}}, R) = (\lambdaAdjAbc, 3)$.
        % The values of $\gamma$ for the types $\typeE{n}$ where computed in GAP \cite{GAP} using the SLA package~\cite{deGraafGAP2019}.
        For the type $\typeA{n}$ we obtain here the same bound as in \cite{Kalubaetal2021}.
    }
        \label{table}
\end{table}

\subsection{Types with double bonds}

In this section we concentrate on the universal Chevalley groups over $\Z$ of type $\typeC{n}$, that is, symplectic groups $\Sp_{2n}(\Z)$. The discussion will also give a new proof of property $T$ for the elementary Chevalley groups over $\Z$ of types $\typeB{n}$ and $\typeF{4}$. Since the focus here is really on symplectic groups, we will often write $\typeC{2}$ instead of $\typeB{2}$.
% even though those results follow mutatis mutandis.

The first two results are analogous to \cref{adj in sl,simply laced}
and the first one is proved by a computer-assisted calculation as well.

\begin{thm}
    \label{adj in sp}
    Let $G = \Sp_4({\Z})$ be the universal Chevalley group over $\Z$ of type
    $\typeB{2} = \typeC{2}$, endowed with the set of Steinberg generators $S$.
    Let $V$ denote the ambient vector space of the root system. Then
    \[\Adj_V -\lambda \Delta_V \geqslant_R 0\]
    for $(\lambda, R) = (\lambdaAdjCbc, 3)$.
\end{thm}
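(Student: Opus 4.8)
The plan is to reduce the claimed inequality $\Adj_V - \lambdaAdjCbc\,\Delta_V \geqslant_3 0$ in $\Z G$, with $G = \Sp_4(\Z)$, to a finite-dimensional semidefinite feasibility problem that can be solved (and rigorously certified) by computer, exactly in the style of \cite{Ozawa2016,NetzerThom2015,KalubaNowak2018}. First I would make the element $\Adj_V$ fully explicit: the root system $\typeC 2$ has eight roots, the group $G_W = G$ is generated by the eight Steinberg generators $S = \bigsqcup_{\alpha} S_\alpha$ with $|S| = 16$, and
\[
    \Adj_V = \sum_{\alpha \in \Omega} \Delta_\alpha \Bigl( \sum_{\alpha \not\sim \beta \in \Omega} \Delta_\beta \Bigr),
\]
where each $\Delta_\alpha = 2 - x_\alpha(1) - x_\alpha(-1)$ and $\langle S_\alpha \rangle \cong \SL_2(\Z)$. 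Since each $\Delta_\alpha$ is supported in the ball of radius $1$, the product $\Delta_\alpha \Delta_\beta$ is supported in the ball of radius $2$, and $\Delta_V$ in the ball of radius $1$; so $y := \Adj_V - \lambdaAdjCbc\,\Delta_V$ is a self-adjoint element of $\Z G$ supported in the ball $B_2$ of radius $2$ in the word metric.

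Next I would set up the sum-of-Hermitian-squares search on radius $R = 3$. Fix the finite set $E = B_3 \subset G$ (elements of word length $\leqslant 3$), and look for a positive semidefinite matrix $P = (P_{g,h})_{g,h \in E}$ such that, writing $\xi_g$ for the column vectors, $\sum_g \xi_g^* \xi_g = y$ translates into the linear constraints
\[
    \sum_{\substack{g,h \in E \\ g^{-1}h = w}} P_{g,h} = y_w \qquad \text{for every } w \in B_2,
\]
together with $\sum_{g^{-1}h = w} P_{g,h} = 0$ for $w \notin B_2$ (automatically satisfied in $B_6 \setminus B_2$, and the support of any such SOS lies in $B_6$). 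This is a feasibility SDP; solving it numerically for $\lambda$ slightly below $\lambdaAdjCbc$ and then, crucially, converting the floating-point solution into an exact rational (or interval-arithmetic) certificate — rounding $P$ to a nearby exact PSD matrix whose entries satisfy the linear constraints on the nose — gives the honest proof that $y \geqslant_3 0$. I would exploit the symmetry of the problem under the Weyl group $W_{\typeC 2}$ (of order $8$) and under the sign-flip automorphisms $x_\alpha(1) \leftrightarrow x_\alpha(-1)$ to block-diagonalise $P$, both to make the SDP tractable and to make the rational reconstruction stable, exactly as in \cite{KalubaNowak2018,Kalubaetal2021}.

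The main obstacle is precisely this last certification step: a numerical SDP solution only gives an approximate decomposition, and the gap between the numerically attainable $\lambda$ and the value $\lambdaAdjCbc$ recorded in the statement must be large enough to absorb the rounding error when one projects onto the affine subspace cut out by the equality constraints and onto the PSD cone. Managing this — choosing the target $\lambda$ conservatively, using the symmetry-adapted basis so that the residual after rounding is itself a small sum of Hermitian squares that can be added back, and running the final check in exact arithmetic — is the delicate part; the rest (writing down $\Adj_V$, enumerating $B_3$, assembling the constraint matrices) is bookkeeping. As the remark following \cref{adj in sl} indicates, this is the same machinery, the same code, and the same patience as in \cite{Kalubaetal2021}, now applied to $\Sp_4(\Z)$ in place of $\SL_3(\Z)$.
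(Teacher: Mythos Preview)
Your proposal is correct and matches the paper's approach: the paper states explicitly that this theorem is ``proved by a computer-assisted calculation'' in the same style as \cref{adj in sl}, and the replication section points to the same SDP-plus-certification pipeline (JuMP/SCS/COSMO with interval-arithmetic certification and Weyl-group symmetrisation) that you describe. Two minor slips that do not affect the argument: $y$ lies in $\R G$, not $\Z G$, since $\lambda$ is not an integer; and $\langle S_\alpha\rangle = \langle x_\alpha(1)\rangle \cong \Z$, not $\SL_2(\Z)$ (the paper's earlier remark to this effect refers to a more general setup).
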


\begin{thm}
    \label{doubly laced}
    Let $G$ be an elementary Chevalley group over $\Z$ associated to
    a root system $\Omega$ of type $\typeB{n}$ or $\typeC{n}$ with $n \geqslant 2$,
    or of type $\typeF{4}$.
    Then $(G,S)$ has property $T$ with a witness of type $(\lambda,3)$, where
    $\lambda = (n-1)\lambda_{\typeC{2}}$ for types $\typeB{n}$ and $\typeC{n}$, and
    $\lambda = 3\lambda_{\typeC{2}} + 4 \lambda_{\typeA{2}}$ for type $\typeF{4}$,
    with $\lambda_{\typeA{2}} = \lambdaAdjAbc$ and $\lambda_{\typeC{2}} = \lambdaAdjCbc$.
\end{thm}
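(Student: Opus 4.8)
The plan is to deduce \cref{doubly laced} from \cref{main cor} (for types $\typeB n$ and $\typeC n$) and from \cref{main thm} directly (for $\typeF 4$), by first classifying the irreducible admissible $2$-dimensional subspaces $W$ of the ambient space $V$ and then bounding $\Adj_W - \lambda_W \Delta_W$ for each of them using the base cases \cref{adj in sp,adj in sl}. In a doubly-laced system the only irreducible rank-$2$ root subsystems are of type $\typeA 2$ (six roots, all of the same length) and of type $\typeC 2 = \typeB 2$ (eight roots); any other $2$-dimensional section $\Omega_W$ has at most four roots and is reducible, hence not in $\Airr$. So the first step is to fix an arbitrary $W \in \Airr$ and split into these two cases: if $\Omega_W$ is of type $\typeA 2$, then $G_W$ is an elementary Chevalley group of type $\typeA 2$, a quotient of $\SL_3(\Z)$, and \cref{adj in sl} gives $\Adj_W - \lambda_{\typeA 2}\Delta_W \geqslant_3 0$ with $\lambda_{\typeA 2} = \lambdaAdjAbc$; if $\Omega_W$ is of type $\typeC 2$, then $G_W$ is a quotient of $\Sp_4(\Z)$ and \cref{adj in sp} gives $\Adj_W - \lambda_{\typeC 2}\Delta_W \geqslant_3 0$ with $\lambda_{\typeC 2} = \lambdaAdjCbc$. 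In both cases $R_W = 3$.

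For types $\typeB n$ and $\typeC n$ with $n \geqslant 2$ one then wants to apply \cref{main cor}; its hypothesis requires $\lambda_W > 0$ for \emph{every} $W \in \Airr$, which indeed holds since both base constants are positive. The subtlety is that \cref{main cor} as stated takes $\lambda = \gamma(\Omega)\min_{W}\lambda_W$, but for $\typeB n$/$\typeC n$ the two constants differ and, more importantly, not every root lies in an $\typeA 2$-subspace or in a $\typeC 2$-subspace. I would therefore go back to the finer estimate inside \cref{main thm}: for each root $\alpha$, the relevant quantity is $\lambda_\alpha = \sum_{\alpha \in W \in \Airr}\lambda_W$, and $\lambda = \min_\alpha \lambda_\alpha$. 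So the real combinatorial task is to count, for a fixed root $\alpha$ in $\typeC n$ (realised as $\{\pm e_i \pm e_j\} \setminus \{0\}$), how many irreducible admissible $W$ contain it, separated by whether $\Omega_W$ is of type $\typeA 2$ or $\typeC 2$, and to check that the resulting sum equals $(n-1)\lambda_{\typeC 2}$ independently of which root $\alpha$ we pick (using the Weyl group to reduce to, say, $\alpha = e_1 - e_2$ and $\alpha = 2e_1$). One expects a clean bookkeeping identity here — in type $\typeC n$ a short root and a long root should both see a total weight of $(n-1)\lambda_{\typeC 2}$ after combining the $\typeA 2$-contributions and the $\typeC 2$-contributions, which is presumably why the statement is uniform; verifying this identity (and that the minimum is attained with value exactly $(n-1)\lambda_{\typeC 2}$) is the main computational content. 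For type $\typeB n$ one runs the same count; the root system is dual and the $2$-dimensional sections are the same up to duality, so the same total is obtained.

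For type $\typeF 4$ one cannot invoke \cref{main cor} with a single factor $\gamma(\Omega)$ because $\typeF 4$ genuinely mixes $\typeA 2$- and $\typeC 2$-subspaces and the Weyl group is not transitive on roots; instead I would apply \cref{main thm} after choosing $\lambda_W = \lambda_{\typeA 2}$ for the $\typeA 2$-sections and $\lambda_W = \lambda_{\typeC 2}$ for the $\typeC 2$-sections, and $R_W = 3$ throughout. The remaining step is the explicit root-combinatorial computation: for a long root $\alpha$ of $\typeF 4$, count the irreducible admissible planes through $\alpha$ of each type, do the same for a short root, and take the minimum of the two totals $\sum_{\alpha\in W\in\Airr}\lambda_W$. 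The claimed answer $\lambda = 3\lambda_{\typeC 2} + 4\lambda_{\typeA 2}$ should come out as this minimum; I would verify it by listing the $24$ long and $24$ short roots of $\typeF 4$ in the standard coordinates (long roots $\pm e_i \pm e_j$, short roots $\pm e_i$ and $\tfrac12(\pm e_1 \pm e_2 \pm e_3 \pm e_4)$) and enumerating the $2$-dimensional sections, which is finite and mechanical. Finally, since $V$ is spanned by the admissible subspaces (each root lies in at least one, by irreducibility and rank $\geqslant 2$, exactly as in the proof of \cref{main cor}) and every $\lambda_W > 0$, \cref{main thm} applies and yields a witness of type $(\lambda, 3)$ with the stated $\lambda$.

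The step I expect to be the main obstacle is the uniformity check for $\typeB n$ and $\typeC n$ — namely proving that $\sum_{\alpha \in W \in \Airr}\lambda_W$ is the same for short and long roots (and equal to $(n-1)\lambda_{\typeC 2}$) — since this is where the specific values $\lambdaAdjAbc$ and $\lambdaAdjCbc$ and the precise counts of $\typeA 2$- versus $\typeC 2$-sections through a given root have to conspire; the $\typeF 4$ case is conceptually the same but finite, so it is tedious rather than hard, and the $\typeA 2$/$\typeC 2$ dichotomy of rank-$2$ irreducible subsystems in a doubly-laced ambient system is standard.
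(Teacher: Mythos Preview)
Your overall approach matches the paper's exactly: classify $W\in\Airr$ as type $\typeA{2}$ or $\typeC{2}$, invoke \cref{adj in sl,adj in sp} as base cases, then compute $\lambda_\alpha=\sum_{\alpha\in W\in\Airr}\lambda_W$ for each root and take the minimum via \cref{main thm}.

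There is, however, one concrete misconception. You anticipate that the values $\lambda_\alpha$ for short and long roots in $\typeC{n}$ must \emph{conspire to be equal} to $(n-1)\lambda_{\typeC{2}}$, and you flag this ``uniformity check'' as the main obstacle. No such identity holds, and none is needed. For a long root $\alpha$ every $W\in\Airr$ containing $\alpha$ is of type $\typeC{2}$ and there are $n-1$ of them, so $\lambda_\alpha=(n-1)\lambda_{\typeC{2}}$. For a short root there is exactly one $W$ of type $\typeC{2}$ and $2(n-2)$ of type $\typeA{2}$, giving $\lambda_\alpha=\lambda_{\typeC{2}}+2(n-2)\lambda_{\typeA{2}}$. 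These two numbers are genuinely different; the minimum is the long-root value precisely because $\lambda_{\typeC{2}}<2\lambda_{\typeA{2}}$ numerically. So the ``conspiracy'' you are looking for is a simple inequality between the two explicit constants, not an equality. The type $\typeB{n}$ case is identical by duality, as you say.

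For $\typeF{4}$ your plan is fine, but the paper's shortcut is worth noting: $\typeF{4}$ is self-dual, so the counts for a short root and a long root agree and you only need to do one of them. The paper finds $3$ planes of type $\typeC{2}$ and $4$ of type $\typeA{2}$ through a short root (by counting $18$ non-proportional roots spanning $\typeC{2}$-planes, six per plane, and $16$ spanning $\typeA{2}$-planes, four per plane), hence $\lambda=3\lambda_{\typeC{2}}+4\lambda_{\typeA{2}}$ with no minimum to take.
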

\begin{proof}
	First let us give a quick argument for why in the cases under consideration there is a witness of type $(\lambda, 3)$.	
    For admissible subspaces $W$ only three different types of root systems appear
    as $\Omega_W$: $\typeA{1}\times \typeA{1}$, $\typeA{2}$, and $\typeC{2}$.
    By \cref{main cor} we need to only worry about the last two types.

    We take $\lambda_W = \lambda_{\typeA{2}} =  \lambdaAdjAbc$ for the second, and
    $\lambda_W = \lambda_{\typeC{2}} = \lambdaAdjCbc$ for the third.
    We now apply \cref{adj in sl,adj in sp} and conclude that
    $(G,S)$ has property $T$ with a witness of type $(\lambda,3)$.
    
    \smallskip
    In order to compute $\lambda$, we will use \cref{main thm}, again disregarding type $\typeA{1}\times \typeA{1}$, since it does not contribute.

    We will focus on $\typeC{n}$, since $\typeB{n}$ is its dual,
    and so the computation is literally the same.
    When $\Omega$ is of type $\typeC{n}$ and $\alpha$ is a long root,
    then we know that every  $W\in \Airr$
    containing $\alpha$ is of type $\typeC{2}$,
    and thus the corresponding $\lambda$ is $(n-1)\lambda_{\typeC{2}}$ -- the coefficient $n-1$ comes from counting $W \in \Airr$ of type $\typeC{2}$ containing $\alpha$.
    When $\alpha$ is a short root, there is a single $W \in \Airr$ of type $\typeC{2}$ that contains $\alpha$, and there are $2(n-2)$ such subspaces of type $\typeA{2}$; hence
    we obtain $\lambda$ equal to $\lambda_{\typeC{2}} + 2(n-2)\lambda_{\typeA{2}}$.
    The minimum of these two is $(n-1)\lambda_{\typeC{2}}$.

    Finally, consider $\Omega$ of type $\typeF{4}$.
    Since this type is self-dual,
    we may look at short roots without losing any generality, and
    since all roots of the same length form an orbit under the action of the Weyl group,
    we need to only carry out our computation for a single root of our choosing.
    By counting, we find that for every short root $\alpha \in \typeF{4}$
    there exist $18$ other, non-proportional roots $\beta$ such that
    $(\alpha, \beta)$ spans an admissible subspace of type $\typeC{2}$.
    Since the same subspace will be spanned for $6$ different choices of $\beta$,
    we see that there are precisely $3$ admissible subspaces containing $\beta$.
    By similar considerations we arrive at $16$ roots which give rise to $4$
    admissible subspaces $V$ such that $\Omega_V \cong \typeA{2}$.
    Hence we conclude that
    $\lambda = 3\lambda_{\typeC{2}} + 4\lambda_{\typeA{2}}$ for type $\typeF{4}$.
\end{proof}

For symplectic groups, with a more involved argument we will now
find a witness for property $T$ on radius $2$.
Our considerations will apply to $\Sp_{2n}({\Z})$ for $n\geqslant 3$;
for $n=2$ we offer the following direct computation.

\begin{thm}
    Let $G = \Sp_4({\Z})$ be the universal Chevalley group over $\Z$ of type
    $\typeB{2} = \typeC{2}$ endowed with the set of Steinberg generators $S$.
    The pair $(G,S)$ has property $T$ with a witness of type
    $(\lambda, R) \in \left\{(\lambdaSpfourb, 2), (\lambdaSpfourc, 3) \right\}$.
\end{thm}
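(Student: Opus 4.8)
The plan is to reduce the statement, via \cref{Ozawa}, to a finite semidefinite programming problem and then to certify a numerical solution of that problem rigorously. Concretely, for each of the two pairs $(\lambda, R) \in \{(\lambdaSpfourb, 2),(\lambdaSpfourc, 3)\}$ it suffices to exhibit a sum of Hermitian squares $\Delta^2 - \lambda\Delta = \sum_i \xi_i^*\xi_i$ with all the $\xi_i$ supported on the ball of radius $R$. Note that, in contrast with combining \cref{adj in sp} with the identity $\Delta^2 = \Sq_V + \Adj_V$ and $\Sq_V \geqslant_1 0$ (which only yields $\Delta^2 - \lambdaAdjCbc\Delta \geqslant_3 0$), we attack $\Delta^2 - \lambda\Delta$ directly; this is exactly what makes the constants $\lambdaSpfourb$ and $\lambdaSpfourc$ larger than $\lambdaAdjCbc$.

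The first step is purely combinatorial: enumerate the finite ball $B_R$ of radius $R$ in the Cayley graph of $\Sp_4(\Z)$ with its $16$ Steinberg generators. Writing $\xi$ for the tuple of group elements in $B_R$ and, for a symmetric real matrix $P$ indexed by $B_R$, $\langle \xi, P\xi\rangle = \sum_{g,h}P_{g,h}\,g^{-1}h \in \R G$, the identity $\langle \xi, P\xi\rangle = \Delta^2 - \lambda\Delta$ becomes a finite system of linear equations in the entries of $P$ and in $\lambda$. I would then solve the semidefinite program ``maximise $\lambda$ subject to $P\succeq 0$ and to these linear equations'' numerically; for $R = 2$ I expect the optimum to lie slightly above $\lambdaSpfourb$, and for $R = 3$ slightly above $\lambdaSpfourc$.

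The decisive, and hardest, part is to convert the resulting floating-point pair $(\lambda_0, P_0)$ into a rigorous sum-of-Hermitian-squares certificate; here I would follow closely the method used for \cref{adj in sl,adj in sp}, developed in \cite{Kalubaetal2021} on the basis of the quantitative form of \cref{Ozawa} due to Netzer--Thom \cite{NetzerThom2015}, and realised in the accompanying code \cite{Kalubaetal2019} (compare the remark following \cref{adj in sl}). In outline: replace $P_0$ by its compression to the orthogonal complement of the all-ones vector, so that the candidate squares $\xi_i$ read off from a factorisation $P_0 = Q^{\mathsf T}Q$ have trivial augmentation; round to exact rational arithmetic and restore positive semidefiniteness by a small rational perturbation whose admissibility is checked by an exact symmetric factorisation; bound the $\ell^1$-norm of the residual $b = \Delta^2 - \lambda_0\Delta - \sum_i \xi_i^*\xi_i$; and finally absorb $b$ into $\Delta$ using the elementary squares $(1-g)^*(1-g) = 2 - g - g^{-1}$, which yields $\Delta^2 - (\lambda_0 - C\lVert b\rVert_1)\Delta \geqslant_R 0$ for an explicit constant $C$. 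It then only remains to check that $\lambda_0 - C\lVert b\rVert_1$ still exceeds the claimed value. Since $G$ is infinite, every group-ring computation has to be carried out inside a fixed finite window of $\R G$ chosen large enough that nothing is silently truncated; the genuine obstacle is therefore the interplay between solving a moderately large semidefinite program to sufficient accuracy and producing a fully verified rational witness, rather than any conceptual difficulty -- which is why this case, like the others, can be settled with ``the same methods, the same code and patience''.
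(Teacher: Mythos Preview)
Your proposal is correct and matches the paper's approach: the paper offers this theorem as a ``direct computation'', meaning exactly the Ozawa/Netzer--Thom semidefinite-programming pipeline you describe, carried out with the code referenced in the replication section. The only minor discrepancy is that the paper's certification step uses interval arithmetic rather than exact rational rounding, but this is an implementation detail and your outline is otherwise faithful to what is done.
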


%Before stating the second result, we need to define some more elements of $\R \Sp_{2n}({\Z})$.

Now let $\Omega$ denote the root system of type $\typeC{n}$ embedded in a vector space $V$, and let $G$ be the associated universal Chevalley group over $\Z$, that is $\Sp_{2n}({\Z})$, endowed with the Steinberg generators. Let $\Delta = \Delta_V$. We have
\[
    \Delta^2 =
    \sum_{\alpha, \beta \in \Omega} \Delta_\alpha \Delta_\beta
    = \sum_{\alpha \in \Omega} {\Delta_\alpha}^2 + \sum_{W \in \A} \Adj_W.
\]
Let $\Omega_\mathrm{long}$ denote the set of long roots in $\Omega$, and let $\Omega_\mathrm{short}$ denote the set of short roots.
We let
\[
    \Sq_\mathrm{long} = \sum_{\alpha \in \Omega_\mathrm{long}} {\Delta_\alpha}^2
    \quad \text{and} \quad
    \Sq_\mathrm{short} = \sum_{\alpha \in \Omega_\mathrm{short}} {\Delta_\alpha}^2.
\]

For a two-dimensional admissible subspace $W$, the root system $\Omega_W$ can be of type $\typeA{1}\times \typeA{1}$, $\typeA{1}\times \typeC{1}$, $\typeA{2}$, or $\typeC{2}$, where the distinction between types $\typeA{1}$ and $\typeC{1}$ is that we consider the former to consist of short roots in $\Omega$, and the latter to consist of long roots. Note that type ${\typeC{1}\times\typeC{1}}$ does not appear, since any plane containing two long roots will intersect $\Omega$ in a system of type $\typeC{2}$.

For $\type{Z}{}$ being one of the above types,
we let $\Adj_{\type{Z}{}}$ denote the sum of the elements $\Adj_W$
with $\Omega_W$ of type $\type{Z}{}$.

We now have
\[
    \Delta^2 =
    \Sq_\mathrm{long} +
    \Sq_\mathrm{short} +
    \Adj_{\typeC{2}} +
    \Adj_{\typeA{1}\times\typeC{1}} +
    \Adj_{\typeA{2}} +
    \Adj_{\typeA{1}\times\typeA{1}}.
\]
In the above equation, all the elements depend on $n$.
We have been suppressing this dependence so far, but we will now make it explicit.

\begin{dfn}[Levels]
    For every $n \geqslant 2$, we define four elements of $\Z \Sp_{2n}(\Z)$ as follows.
    \begin{align*}
        \lev_1^n & =   \Sq_\mathrm{long}                                 \\
        \lev_2^n & =   \Sq_\mathrm{short}  + \Adj_{\typeC{2}}            \\
        \lev_3^n & = \Adj_{\typeA{1}\times\typeC{1}}  + \Adj_{\typeA{2}} \\
        \lev_4^n & = \Adj_{\typeA{1}\times\typeA{1}}
    \end{align*}
\end{dfn}

The reason for calling these elements levels
comes from the way they behave under the action of the Weyl group.
More specifically, consider the standard embedding of
$\Sp_{2n}(\Z)$ into $\Sp_{2m}(\Z)$ for $m>n$.
Let $W_{\typeC{m}}$ denote the Weyl group of type $\typeC{m}$ and
denote by $x^w$ the action of $w\in W_{\typeC{m}}$ on an element $x$
from the corresponding group (ring).
\iffalse Given an element $x \in \R \Sp_{2n}(\Z)$, we let
    \[
        W_{\typeC{m}} . x = \sum_{\sigma \in W_{\typeC{m}}} \sigma(x)
    \]
    where we treat $x$ as an element of $\Sp_{2m}(\Z)$ when we apply $\sigma$ to it.
\fi
It is clear from the definitions that if we take
$\Adj_{\type{Z}{}}\in \R \Sp_{2n}(\Z)$,
then the sum
\[\sum_{w \in W_{\typeC{m}}} {\Adj_{\type{Z}{}}}^w\]
is equal to $\Adj_{\type{Z}{}}$ formed in $\R \Sp_{2m}(\Z)$,
up to multiplication by a constant depending on $n,m$ and $\type{Z}{}$,
and similarly for $\Sq_\mathrm{long}$ and $\Sq_\mathrm{short}$.
We group these elements into the levels, precisely depending on these constants.

\begin{lem} Take $n \geqslant m \geqslant i$ and $i \in \{1,2,3,4\}$. We have
    \[
        \sum_{w \in W_{\typeC{n}}} \left(\lev_i^m\right)^w = \frac{2^n \cdot m! \cdot (n-i)!}{(m-i)!} \lev_i^n.
    \]
\end{lem}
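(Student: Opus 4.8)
The plan is to compute both sides of the identity by a double-counting argument, using the definitions of the levels and the transitivity of the Weyl group $W_{\typeC{n}}$ on the relevant geometric configurations. First I would fix notation: the elements $\lev_i^m$ are sums of pieces ($\Sq_\mathrm{long}$, $\Sq_\mathrm{short}$, $\Adj_W$ for various two-dimensional $W$) indexed respectively by long roots, short roots, and admissible planes of a prescribed type inside the copy of $\typeC m$. When we apply $w \in W_{\typeC n}$ and sum, we are effectively summing the $i$-th level piece over all $W_{\typeC n}$-translates of a $\typeC m$-subconfiguration sitting inside $\typeC n$. Since every piece of $\lev_i^n$ (a long root, a short root, an $\Adj_W$-term) lies inside \emph{some} translate of the standard $\typeC m$-subsystem whenever $m \geqslant i$ — this is the point of grouping precisely these terms into level $i$ — the sum $\sum_{w} (\lev_i^m)^w$ is a constant multiple of $\lev_i^n$, and the whole content of the lemma is the value of that constant.

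Next I would identify the constant by counting, for a \emph{single} fixed piece $P$ of $\lev_i^n$ (say a single long root if $i=1$, a single $\Adj_W$-term with $\Omega_W$ of the appropriate type if $i \in \{2,3,4\}$), the number of pairs $(w, Q)$ with $w \in W_{\typeC n}$ and $Q$ a piece of $\lev_i^m$ such that $w \cdot Q = P$. Equivalently: the number of elements $w \in W_{\typeC n}$ such that $w^{-1} P$ lies inside the standard $\typeC m$-subsystem. Writing $\lvert W_{\typeC n}\rvert = 2^n n!$, the stabiliser of a fixed piece $P$ under $W_{\typeC n}$ has a known order; a piece of level $i$ is (up to the Weyl action and signs) governed by a choice of an ordered $i$-subset of the $n$ coordinate axes, so its $W_{\typeC n}$-orbit has size comparable to $2^? \cdot n!/(n-i)!$, and the count of $w$ with $w^{-1}P \subset \typeC m$ works out to $\frac{2^n \cdot m! \cdot (n-i)!}{(m-i)!}$ after dividing by the orbit size of $P$ inside $\typeC n$ and multiplying by the orbit size of the corresponding piece inside $\typeC m$. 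Concretely, $\lev_1$ (long roots $\pm 2e_k$) uses $1$ coordinate, $\lev_2$ uses $2$ (short root $e_k - e_\ell$ or $\Adj$ of a $\typeC 2$-plane), $\lev_3$ uses $3$ (an $\typeA 2$ or $\typeA 1 \times \typeC 1$ plane spans a $3$-coordinate flat in the relevant sense — this is exactly the ``level'' bookkeeping), and $\lev_4$ uses $4$ (the $\typeA 1 \times \typeA 1$ planes, i.e. two disjoint short-root planes). The uniform formula then follows because in each case the orbit-counting produces the same ratio $(n-i)!/(m-i)!$ of ``remaining coordinate'' factorials, times the ambient factor $2^n$ and the ``internal'' factor $m!$.

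I would organise the argument as: (1) show $\sum_{w \in W_{\typeC n}}(\lev_i^m)^w$ is supported on the same group-ring pieces as $\lev_i^n$ and is $W_{\typeC n}$-invariant, hence a constant multiple $c_{i,m,n}\,\lev_i^n$ — this uses that the $W_{\typeC n}$-action permutes the pieces of level $i$ transitively (within each length class), which follows from transitivity of $W_{\typeC n}$ on roots of a given length and on the corresponding flags of coordinate axes; (2) evaluate $c_{i,m,n}$ by comparing the coefficient of one fixed piece on both sides, reducing to the combinatorial count above; (3) assemble the count into the closed form $\frac{2^n m!\,(n-i)!}{(m-i)!}$, checking the four cases $i=1,2,3,4$ individually since each ``level'' involves a slightly different type of piece but the same coordinate-counting skeleton.

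The main obstacle I expect is step (3) for $i = 3$: the level-$3$ pieces are $\Adj_W$ for $\Omega_W$ of type $\typeA 2$ \emph{or} $\typeA 1 \times \typeC 1$, and one must check that both kinds of plane are ``supported on $3$ coordinate axes'' in the precise sense needed, that $W_{\typeC n}$ acts transitively on each kind, and that the two kinds contribute with compatible multiplicities so that their sum transforms by a single scalar. A secondary subtlety is making sure the constant genuinely does not depend on which piece of $\lev_i^n$ one picks (i.e. that long and short pieces inside $\lev_2$, or the two plane-types inside $\lev_3$, give the same scalar) — this is where the specific grouping into levels is doing real work, and I would verify it by the orbit-stabiliser computation rather than by inspection.
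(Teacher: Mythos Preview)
The paper does not actually prove this lemma; it is stated without proof, preceded only by the remark that summing each $\Adj_{\type{Z}{}}$ (or $\Sq_{\mathrm{long}}$, $\Sq_{\mathrm{short}}$) over $W_{\typeC{n}}$ gives a scalar multiple of the corresponding element in the larger group --- said to be ``clear from the definitions'' --- and that the pieces are grouped into levels precisely so that these scalars agree within each level. Your double-counting plan is correct and supplies exactly the orbit-counting argument the paper leaves implicit.

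The point you flag about level $3$ is the only place with real content, and your diagnosis is right. One checks that every $\typeA{2}$-plane in $\typeC{m}$ is supported on exactly three coordinate indices (up to sign flips it is the plane $\{a e_j + b e_k + c e_\ell : a+b+c=0\}$, and there are four such planes per $3$-set of indices), and likewise every $\typeA{1}\times\typeC{1}$-plane is $\lspan(e_k,\, e_j \pm e_\ell)$ for three distinct indices. Hence the number of planes of either type in $\typeC{m}$ is a fixed constant times $m(m-1)(m-2)$, and $W_{\typeC{n}}$ acts transitively on each type separately; both therefore transform by the common scalar $2^n(n-3)!\,m(m-1)(m-2) = \frac{2^n m!\,(n-3)!}{(m-3)!}$. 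The analogous check for level $2$ (short roots and $\typeC{2}$-planes both use exactly two indices) resolves the other compatibility you mention. After that, the closed form is immediate from $\lvert W_{\typeC{n}}\rvert = 2^n n!$ and the ratio of counts in $\typeC{m}$ versus $\typeC{n}$.
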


\iffalse
    \begin{lem}
        For $m \geqslant 3$ we  have
        \begin{align*}
            \Sigma_{w \in W_{\typeC{m}}} w. \lev_1^3 & = 3\cdot 2^m \cdot (m-1)! \lev_1^m     \\
            \Sigma_{w \in W_{\typeC{m}}} w. \lev_2^3 & = 3\cdot 2^{m+1} \cdot (m-2)! \lev_2^m \\
            \Sigma_{w \in W_{\typeC{m}}} w. \lev_3^3 & = 3\cdot 2^{m+1} \cdot (m-3)! \lev_3^m %\\
            %\Sigma_{w \in W_{\typeC{m}}} w. \lev_4^3 &= 3\cdot 2^{m+2} \cdot (m-4)! \lev_4^m
        \end{align*}
    \end{lem}
\fi

A direct computation now gives us the following.

\begin{thm}
    \label{C3}
    Let $G = \Sp_6({\Z})$ be the universal Chevalley group over $\Z$ of type $\typeC{3}$
    endowed with the set of Steinberg generators $S$.
    Let $V$ denote the ambient vector space of the root system.
    Then for $\lambda = \lambdaLevels$ we have
    \[\lev_2^3 +  \lev_3^3 -\lambda \Delta_V \geqslant_2 0.\]
\end{thm}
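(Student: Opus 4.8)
The statement is a plain sum-of-Hermitian-squares assertion in the sense of \cref{sos def} (Ozawa's \cref{Ozawa} is not invoked here), so the plan is to produce the decomposition explicitly. Write
\[
	x = \lev_2^3 + \lev_3^3 - \lambda\, \Delta_V \in \Z\Sp_6(\Z), \qquad \lambda = \lambdaLevels .
\]
Unwinding the definitions, $\lev_2^3 = \Sq_\mathrm{short} + \Adj_{\typeC{2}}$ and $\lev_3^3 = \Adj_{\typeA{1}\times\typeC{1}} + \Adj_{\typeA{2}}$ are concrete sums of products $\Delta_\alpha \Delta_\beta$ of root Laplacians, so $x$ is an explicit self-adjoint element supported on the ball of radius $2$. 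Following Ozawa's framework and its quantitative form due to Netzer--Thom, exhibiting an SOS decomposition of $x$ on radius $2$ is a semidefinite feasibility problem: writing $B$ for the ball of radius $2$ in $\Sp_6(\Z)$, one seeks a positive semidefinite matrix $P = (P_{g,h})_{g,h\in B}$ such that, for every $u$ in the ball of radius $4$, the coefficient of $u$ in $x$ equals $\sum_{g^{-1}h = u} P_{g,h}$ (the left-hand side being $0$ for $u \notin B$). From any factorisation $P = \sum_i v_i v_i^{\top}$ with $v_i \in \R^{B}$ one then reads off $\xi_i = \sum_{g\in B}(v_i)_g\, g$ with $x = \sum_i \xi_i^*\xi_i$, which is exactly $x \geqslant_2 0$.

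First I would build $x$ from the matrix formulae for the Steinberg generators $x_\alpha(\pm 1)$ of $\typeC{3}$ recorded above, enumerating the $18$ roots and the two-dimensional subsystems they span (of types $\typeA{1}\times\typeA{1}$, $\typeA{1}\times\typeC{1}$, $\typeA{2}$ and $\typeC{2}$) and summing the associated $\Adj_W$. The Gram matrix $P$ has size $|B|$, on the order of a thousand, so to make the SDP tractable I would exploit its symmetry: $x$ is fixed by the adjoint involution $g\mapsto g^{-1}$, and it is equivariant under the action of the Weyl group $W_{\typeC{3}}$, which is realised on $\Sp_6(\Z)$ by conjugation by suitable monomial matrices and hence permutes $B$. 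Restricting the search to $P$ invariant under the resulting finite symmetry group and decomposing $\R^{B}$ into isotypic components block-diagonalises the problem, exactly as in the computations of \cite{KalubaNowak2018,Kalubaetal2021}.

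The delicate step is turning the numerical solution into a rigorous certificate. I would take the approximate Gram matrix returned by the solver (which sits in the interior of the positive semidefinite cone), project it orthogonally onto the rational affine subspace defined by the reconstruction constraints, and round the result to a nearby exact positive semidefinite matrix, checking definiteness in exact or interval arithmetic. The small $\ell^1$-residual $b = x - \lambda\Delta_V - \sum_i\xi_i^*\xi_i$ produced by rounding is absorbed by decreasing $\lambda$ slightly: this is harmless because $\Delta_V = \tfrac12\sum_{s\in S}(1-s)^*(1-s)\geqslant_1 0$, so replacing $\lambda$ by a smaller value only adds a radius-$1$ sum of squares. Reporting the constant $\lambdaLevels$ with a safety margin below the value actually certified then completes the argument.

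I expect the certification, together with the sheer size of $B$, to be the main obstacle: none of the steps is conceptually difficult, but the computation is large and must be carried out and verified by machine. I note finally that no purely structural shortcut seems available at this radius — bounding $\Adj_{\typeC{2}}$ and $\Adj_{\typeA{2}}$ from below via \cref{adj in sp,adj in sl} and using $\Adj_{\typeA{1}\times\typeC{1}}\geqslant_2 0$ and $\Sq_\mathrm{short}\geqslant_1 0$ only produces a witness on radius $3$ (this is in effect \cref{doubly laced} with $n=3$), so the radius-$2$ statement genuinely requires the global computation on $\Sp_6(\Z)$.
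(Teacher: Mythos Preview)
Your proposal is correct and matches the paper's approach: the theorem is stated in the paper as the outcome of a direct computer computation, and the authors' Replication section describes exactly the pipeline you outline (formulating the SOS problem as an SDP via JuMP/PropertyT.jl, reducing by the Weyl-group symmetry using SymbolicWedderburn.jl, solving with SCS/COSMO, and certifying with IntervalArithmetic.jl). One tiny slip: with $\lambda = \lambdaLevels$ the element $x$ lies in $\R\Sp_6(\Z)$ rather than $\Z\Sp_6(\Z)$.
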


\begin{thm}
    \label{doubly laced via levels}
    Let $G = \Sp_{2n}({\Z})$ be the universal Chevalley group over $\Z$
    of type $\typeC{n}$ for $n \geqslant 3$,
    endowed with the set of Steinberg generators $S$.
    The pair $(G,S)$ has property $T$ with a witness of type $( \lambdaLevels, 2)$.
\end{thm}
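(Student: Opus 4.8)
The plan is to reduce, via \cref{Ozawa}, to producing a sum-of-Hermitian-squares witness for $\Delta^2 - \lambda \Delta$ on radius $2$ with $\lambda = \lambdaLevels$, and to build it from \cref{C3} by averaging over coordinate subsystems and correcting the resulting defect with \cref{adj in sl}. Write $\Delta = \Delta_\mathrm{long} + \Delta_\mathrm{short}$ with $\Delta_\mathrm{long} = \sum_{\alpha \in \Omega_\mathrm{long}} \Delta_\alpha$ and $\Delta_\mathrm{short} = \sum_{\alpha \in \Omega_\mathrm{short}} \Delta_\alpha$, and recall $\Delta^2 = \lev_1^n + \lev_2^n + \lev_3^n + \lev_4^n$. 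Two of the levels are non-negative for free: since $\Delta_\alpha = (1 - x_\alpha(1))^{*}(1 - x_\alpha(1))$, every root Laplacian — hence every non-negative combination of root Laplacians, in particular $\lev_1^n = \Sq_\mathrm{long}$ and $\Delta_\mathrm{long}, \Delta_\mathrm{short}$ — is $\geqslant_1 0$; and whenever $W$ is admissible with $\Omega_W$ reducible (equivalently $\lvert \Omega_W \rvert = 4$), the Chevalley commutator formula forces the root subgroups attached to its two orthogonal proportionality classes $\gamma, \delta$ to commute elementwise — $\Omega_W$ contains no root $p\gamma + q\delta$ with $p, q \geqslant 1$ — so $\Adj_W = D_\gamma D_\delta + D_\delta D_\gamma = 2 D_\gamma D_\delta$, where $D_\gamma = \Delta_\gamma + \Delta_{-\gamma}$; as $D_\gamma$ and $D_\delta$ are commuting sums of Hermitian squares supported on radius $1$, this gives $\Adj_W \geqslant_2 0$, so in particular $\lev_4^n = \Adj_{\typeA{1} \times \typeA{1}} \geqslant_2 0$ and $\Adj_{\typeA{1} \times \typeC{1}} \geqslant_2 0$. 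It therefore suffices to prove $\lev_2^n + \lev_3^n - \lambda \Delta \geqslant_2 0$.

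For this I would use the averaging identities $\sum_U \lev_2^U = (n-2) \lev_2^n$, $\sum_U \lev_3^U = \lev_3^n$ and $\sum_U \Delta_U = \binom{n-1}{2} \Delta_\mathrm{long} + (n-2) \Delta_\mathrm{short}$, where $U$ ranges over the $\binom{n}{3}$ coordinate $3$-subspaces of the ambient space (so that $\Omega_U$ has type $\typeC{3}$ and $\lev_2^U, \lev_3^U, \Delta_U$ are formed inside $G_U$): these hold because a constituent of $\lev_2^n$ (a square $\Delta_\alpha^{2}$ of a short root, or an $\Adj_W$ with $\Omega_W \cong \typeC{2}$) occupies two coordinates and so lies in $n-2$ of the $U$, a constituent of $\lev_3^n$ occupies three coordinates and lies in exactly one, while a long root lies in $\binom{n-1}{2}$ of the $U$ and a short root in $n-2$. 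Combining them with $\lev_2^n = \Sq_\mathrm{short} + \Adj_{\typeC{2}}$, $\lev_3^n = \Adj_{\typeA{1} \times \typeC{1}} + \Adj_{\typeA{2}}$ and $\Adj_{\typeA{2}} = \sum_{W'} \Adj_{W'}$ (sum over admissible $W'$ with $\Omega_{W'} \cong \typeA{2}$) one checks the identity
\[
(n-2)\bigl(\lev_2^n + \lev_3^n - \lambda \Delta\bigr) - \sum_U \bigl(\lev_2^U + \lev_3^U - \lambda \Delta_U\bigr) = (n-3)\bigl(\Adj_{\typeA{1} \times \typeC{1}} + \Adj_{\typeA{2}}\bigr) + \tfrac{\lambda (n-2)(n-3)}{2}\, \Delta_\mathrm{long}.
\]
Here each $\lev_2^U + \lev_3^U - \lambda \Delta_U \geqslant_2 0$ by \cref{C3} applied to $G_U$ (a quotient of $\Sp_6(\Z)$), and the right-hand side is $\geqslant_2 0$: the term $\Adj_{\typeA{1} \times \typeC{1}}$ and the coefficient of $\Delta_\mathrm{long}$ are handled by the first paragraph, and writing $\Adj_{W'} = (\Adj_{W'} - \mu \Delta_{W'}) + \mu \Delta_{W'}$ with $\mu = \lambdaAdjAbb$, each $\Adj_{W'} - \mu \Delta_{W'} \geqslant_2 0$ by \cref{adj in sl} at $(\lambda, R) = (\lambdaAdjAbb, 2)$ applied to $G_{W'}$ (a quotient of $\SL_3(\Z)$), while $\Delta_{W'} \geqslant_1 0$. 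Adding everything and dividing by $n - 2 > 0$ gives $\lev_2^n + \lev_3^n - \lambda \Delta \geqslant_2 0$, hence $\Delta^2 - \lambda \Delta = \lev_1^n + \lev_4^n + (\lev_2^n + \lev_3^n - \lambda \Delta) \geqslant_2 0$, and \cref{Ozawa} completes the proof.

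The only non-elementary input is \cref{C3}, the computer-assisted certificate for $\Sp_6(\Z)$; everything else is the combinatorics of how the four levels sit inside coordinate $\typeC{3}$-subsystems together with the elementary commutation lemma for reducible rank-$2$ subsystems. I expect the main obstacle to be pinning down the displayed identity — in particular verifying that the $\Delta_\mathrm{long}$ contributions from $(n-2)\Delta$ and from $\sum_U \Delta_U$ combine into the stated multiple, which is what forces the weight $\tfrac{\lambda(n-2)(n-3)}{2}$ — and recognising that the $\typeA{2}$-correction through \cref{adj in sl} is unavoidable for $n > 3$, since the level-$2$ and level-$3$ parts of $\lev_2^n + \lev_3^n$ are picked up by the coordinate $\typeC{3}$-subsystems with the different multiplicities $n - 2$ and $1$. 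For $n = 3$ the identity is vacuous (there is a single $U$ and the $(n-3)$-terms drop), so the statement reduces to \cref{C3} together with $\lev_1^3 \geqslant_1 0$ and $\lev_4^3 = 0$.
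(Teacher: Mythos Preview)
Your proof is correct and follows essentially the same route as the paper's: both arguments average the $\Sp_6(\Z)$ certificate from \cref{C3} over the $\typeC{3}$-subsystems of $\typeC{n}$, then repair the mismatch between the level-$2$ and level-$3$ multiplicities using $\Adj_{\typeA{2}} \geqslant_2 0$ (via \cref{adj in sl} at radius $2$), $\Adj_{\typeA{1}\times\typeC{1}} \geqslant_2 0$ (via commutation of orthogonal root subgroups), and $\Delta_{\mathrm{long}} \geqslant_1 0$; finally both add $\lev_1^n$ and $\lev_4^n$. The only cosmetic differences are that the paper phrases the averaging as a Weyl-group sum rather than a sum over coordinate $3$-subspaces (these agree up to the constant $2$), and that the paper carries out the correction in two separate steps (first absorbing the excess $\Delta_{\mathrm{long}}$ to reach $2(n-2)\lev_2^n + 2\lev_3^n - 2\lambda(n-2)\Delta \geqslant_2 0$, then adding $2(n-3)\lev_3^n$) whereas you package both corrections into the single displayed identity.
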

\begin{proof}
    Let $\Omega$ denote the root system of type $\typeC{n}$ embedded in a vector space $V$.
    Let $V_3$ be a $3$-dimensional subspace of $V$ such that
    $\Omega_{V_3}$ is a root system of type $\typeC{3}$;
    this way we have a fixed copy of $\Sp_6(\Z)$ embedded in $G$.
    By \cref{C3}
    \[ \lev_2^3 +  \lev_3^3 - \lambda \Delta_{V_3} \geqslant_2 0 \]
    for $\lambda = \lambdaLevels$. We see that
    \begin{multline*}
        0 \leqslant_2
        \frac{1}{3\cdot 2^n \cdot (n-3)!} \sum_{w \in W_{\typeC{n}}} \left(
        \lev_2^3 +  \lev_3^3 - \lambda \Delta_{V_3}
        \right)^w = \\
        = 2(n-2)\lev_2^n + 2\lev_3^n - \lambda (n-2)\left(
        (n-1) \sum_{\alpha \in \Omega_\mathrm{long}}
        \Delta_\alpha + 2 \sum_{\alpha \in \Omega_\mathrm{short}} \Delta_\alpha
        \right).
    \end{multline*}
    Since every $\Delta_\alpha \geqslant_1 0$, we conclude that
    \begin{equation*}
        \tag{$*$}\label{*}
        2(n-2)\lev_2^n +  2\lev_3^n - 2\lambda (n-2) \Delta \geqslant_2 0,
    \end{equation*}
    where $\Delta = \Delta_V$.

    Let $V_2$ be a two-dimensional subspace of $V$ such that $\Omega_{V_2}$ is a root system of type $\typeA{2}$. %; let  $\lambda_2 = \lambdaAdjAbb$.
    By \cref{simply laced}, the element $\Adj_{V_2}$
    % \[
    %\Adj_{V_2} - \lambda_2 \Delta_{V_2}
    %\]
    is a sum of Hermitian squares.
    This implies that so is the element $\Adj_{\typeA{2}}$ in the definition of $\lev_3^3$.
    By our definition of the grading, generating sets $S_\alpha$ and  $S_\beta$
    assigned to orthogonal roots commute with each other
    and therefore $\Adj_{\typeA{1} \times \typeC{1}}$ is a sum of Hermitian squares
    -- the details of this argument can be seen in the proof of \cite{Kalubaetal2021}*{Lemma~3.6}.
    We conclude that $\lev_3^3$, and hence $\lev_3^n$, are sums of Hermitian squares. Adding a positive multiple of  $\lev_3^n$ to \eqref{*} and dividing by a positive constant shows that
    \[
        \lev_2^n +  \lev_3^n - \lambda \Delta \geqslant_2 0.
    \]
    Since $\lev_1^n$ is blatantly a sum of Hermitian squares,
    and since the argument we just used for $\Adj_{\typeA{1} \times \typeC{1}}$ applies also to $\lev_4^n$, we finish by observing that
    \[
        \Delta^2 - \lambda \Delta =
        \sum_{i=1}^4 \lev_i^n -  \lambda \Delta \geqslant_2 0.
        \qedhere
    \]
\end{proof}

\subsection{The triple bond}

A direct computer calculation yields the following result.

\begin{thm}
    \label{triply laced}
    Let $G$ be the Chevalley group over $\Z$ of type $\typeG{2}$, and
    let $S$ be the set of its Steinberg generators.
    The pair $(G,S)$ has property $T$ with a witness of type $(\lambdaGbb,2)$.
\end{thm}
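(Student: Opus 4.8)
The proof is a direct computer-assisted computation, of exactly the same nature as those underlying \cref{adj in sl,adj in sp,C3}. Since $\typeG{2}$ has rank $2$, its ambient space $V$ is the unique admissible subspace, so the reduction machinery of \cref{main thm,main cor} gives nothing here and one must produce a sum-of-Hermitian-squares certificate directly. By \cref{Ozawa}, it suffices to exhibit, for $\lambda = \lambdaGbb$, finitely many $\xi_i \in \R G$ each supported in the ball $B_2$ of radius $2$ about the identity such that
\[
    \Delta^2 - \lambda \Delta = \sum_i {\xi_i}^* \xi_i,
\]
that is, $\Delta^2 - \lambda \Delta \geqslant_2 0$, where $\Delta = \Delta_V$ is the Laplacian of $(G,S)$.

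First I would fix an explicit matrix model of the Chevalley group over $\Z$ of type $\typeG{2}$ --- for instance via the $7$-dimensional representation of the associated Lie algebra --- and enumerate the $24$ Steinberg generators $S = \{x_\alpha(\pm1) : \alpha \in \Omega\}$ as well as the finite set $B_2$. As $\Delta$ is supported on $B_1$, the element $\Delta^2 - \lambda\Delta$ is supported on $B_2$, and a decomposition as above with $\supp \xi_i \subseteq B_2$ exists precisely when there is a positive semidefinite matrix $P = (P_{g,h})_{g,h \in B_2}$ whose convolution matches $\Delta^2 - \lambda\Delta$; concretely, for every group element $u$,
\[
    \sum_{\{(g,h)\,:\,g^{-1}h = u\}} P_{g,h} \; = \; \bigl(\text{coefficient of } u \text{ in } \Delta^2 - \lambda\Delta\bigr).
\]
This is a finite semidefinite feasibility problem, which I would solve numerically with an SDP solver.

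To keep the problem small and, more importantly, numerically well conditioned, I would exploit symmetry. The Weyl group $W(\typeG{2})$ (dihedral of order $12$) permutes $\Omega$, and this action lifts --- via the extended Weyl group, up to signs --- to automorphisms of $G$ permuting $S$; it therefore acts on $\R G$ fixing $\Delta$ and $\Delta^2$. Searching only among $W(\typeG{2})$-invariant certificates block-diagonalizes the Gram matrix $P$ along the isotypic components of this action and shrinks the SDP substantially.

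The heart of the matter is upgrading the floating-point solution to a rigorous proof. Following the procedure of \cite{KalubaNowak2018}, I would round the numerical Gram matrix to an exact (rational or dyadic) matrix, project it onto the affine subspace cut out by the linear constraints for a value of $\lambda$ slightly below the numerical optimum, and then certify in exact or interval arithmetic that the linear identity holds exactly and that the resulting matrix is positive semidefinite (e.g.\ via an $LDL^\top$ factorization), with the final constant no smaller than $\lambdaGbb$. A Cholesky-type factorization of the certified $P$ then produces the $\xi_i$, yielding $\Delta^2 - \lambda\Delta \geqslant_2 0$, and \cref{Ozawa} gives property $T$ for $(G,S)$ with a witness of type $(\lambdaGbb, 2)$. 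The main obstacle is exactly this certification step: the solver returns only an approximately feasible, approximately semidefinite matrix, and extracting from it an honest inequality with an explicit Kazhdan-type constant demands careful control of the rounding and projection errors --- which is precisely where the symmetry reduction pays off.
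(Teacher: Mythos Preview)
Your proposal is correct and matches the paper's approach: the paper offers no argument beyond ``a direct computer calculation yields the following result,'' and your description --- SDP formulation of $\Delta^2 - \lambda\Delta \geqslant_2 0$, Weyl-group symmetry reduction, and interval-arithmetic certification --- is precisely the methodology spelled out in the replication section and the cited companion papers.
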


Let us also mention the following result, also obtained by a direct calculation.
No result of this paper makes use of it, however, for the sake of completeness
(i.e. just because we can), we show the element $\Adj$ of the last remaining type
behaves in a similar manner to all the previous ones.

\begin{thm}
    \label{adj in G2}
    Let $G$ be the Chevalley group over $\Z$ of type $\typeG{2}$, and
    let $S$ be the set of its Steinberg generators.
    Let $V$ denote the ambient vector space of the root system. Then
    \[\Adj_V -\lambda \Delta_V \geqslant_R 0\]
    for $(\lambda, R) = (\lambdaAdjGbc, 3)$.
\end{thm}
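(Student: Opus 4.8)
The proof is, like those of \cref{adj in sl,adj in sp,C3,triply laced}, a computer-assisted construction of an explicit sum-of-Hermitian-squares certificate; since the statement is used nowhere else, the point is simply to exhibit one. The plan is as follows. First I would fix an explicit realisation of the root system $\typeG{2}$ in $\R^2$ -- its twelve roots splitting into the orbit of six long and the orbit of six short roots under the Weyl group $W_{\typeG{2}}$ -- together with the twelve Steinberg generators $x_\alpha(\pm 1)$, realised for concreteness as integer matrices in $\GL_7(\Z)$ via the $7$-dimensional fundamental representation of the simple Lie algebra of type $\typeG{2}$. Writing $S = \{x_\alpha(\pm 1) : \alpha \in \Omega\}$ and $S_\alpha = \{x_\alpha(\pm 1)\}$, one forms in $\Z G$ the elements
\[
\Delta_V = 12 - \sum_{s \in S} s \qquad\text{and}\qquad \Adj_V = \Delta_V^2 - \sum_{\alpha \in \Omega} \Delta_\alpha^2,
\]
both supported on the ball $B_2$ of radius $2$ in the Cayley graph of $(G,S)$.

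The heart of the matter is a semidefinite feasibility programme. One enumerates the ball $B_3$ (a few thousand group elements, each tracked through its image in $\GL_7(\Z)$) and maximises $\lambda$ over all symmetric positive semidefinite matrices $P$ indexed by $B_3$ subject to the finitely many constraints -- linear in $P$ and $\lambda$ -- that encode the identity
\[
\sum_{h,k \in B_3} P_{h,k}\, h^{-1} k = \Adj_V - \lambda \Delta_V
\]
in $\R G$; in particular every coefficient indexed by $B_6 \smallsetminus B_2$ is forced to cancel. A feasible pair $(P, \lambda)$ with $P \succeq 0$ yields, via an $LDL^\top$ factorisation $P = \sum_i \xi_i^* \xi_i$ with each $\xi_i$ supported on $B_3$, exactly the assertion $\Adj_V - \lambda\Delta_V \geqslant_3 0$. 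One solves this programme numerically; since $W_{\typeG{2}}$ (dihedral of order $12$) acts on $G$ by graded automorphisms fixing both $\Adj_V$ and $\Delta_V$, one may average $P$ over $W_{\typeG{2}}$ and block-diagonalise along the isotypic decomposition, which shrinks the programme considerably -- for $\typeG{2}$ this is a convenience rather than a necessity.

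The delicate step -- the one I expect to be the real obstacle -- is converting the floating-point optimum (which comes out slightly above $\lambdaAdjGbc$) into a mathematically rigorous certificate; this is where the radius $R=3$ and the constant $\lambdaAdjGbc$ are pinned down. Following the by-now-standard procedure of \cite{NetzerThom2015,KalubaNowak2018} (used also in \cite{Kalubaetal2021}), I would round the numerical solution to a rational positive semidefinite matrix $P_0$, compute the residual $E = \bigl(\sum_{h,k}(P_0)_{h,k}\,h^{-1}k\bigr) - (\Adj_V - \lambda_0\Delta_V) \in \Q G$ for an explicit rational $\lambda_0$ near the numerical value, and bound $\|E\|_1$ rigorously using exact or interval arithmetic; since $E$ has vanishing augmentation and $2\Delta_V = \sum_{s \in S}(1-s)^*(1-s)$ is a sum of squares, a residual of small $\ell^1$-norm is absorbed at the cost of a controlled decrease of $\lambda_0$. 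Choosing the rounding precision fine enough that this decrease still leaves $\lambda_0$ above $\lambdaAdjGbc$ finishes the argument; as the statement is auxiliary, it suffices to run this once, and $\lambda = \lambdaAdjGbc$ with $R = 3$ is the certified outcome.
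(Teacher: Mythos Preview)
Your proposal is correct and follows precisely the paper's approach: the theorem is stated as ``also obtained by a direct calculation'', and the methodology you outline---formulating the SDP over the ball $B_3$, exploiting the $W_{\typeG{2}}$-symmetry to block-diagonalise, solving numerically, and then certifying rigorously via rounding and interval arithmetic as in \cite{NetzerThom2015,KalubaNowak2018,Kalubaetal2021}---is exactly what the replication section describes (PropertyT.jl, JuMP, SCS/COSMO, IntervalArithmetic.jl). Your observation that the result is auxiliary and used nowhere else also matches the paper's remark verbatim.
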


\section{Replication}

The code used to perform the computations has been stored
in the Zenodo repository \cite{Kaluba2023zenodo}.
A fair share of commentaries (both about computations and the
certification) is included in notebooks contained therein.

For all the gory details we direct the interested reader to study the code used
for replication of \cite{Kalubaetal2019,KalubaNowak2018}.

To pay tribute to the authors below we list a few significant pieces of software
that are used for our computations:

\begin{itemize}
    \item the code is written in julia programming language \cite{Bezanson2017julia};
    \item authors' \href{https://github.com/kalmarek/PropertyT.jl}{PropertyT.jl}
          is used to formulate the optimization problems through
          \href{https://jump.dev/}{JuMP.jl} \cite{Lubin2023} package for
          mathematical programming;
    \item \href{https://github.com/cvxgrp/scs}{Splitting conic solver (scs)}
          \cite{ODonoghue2016} and
          \href{https://oxfordcontrol.github.io/COSMO.jl/stable/}{Conic operator splitting method (COSMO)}
          \cite{Garstka2021} solvers are used to solve the problems;
    \item \href{https://github.com/JuliaIntervals/IntervalArithmetic.jl}{IntervalArithmetic.jl} package
          \cite{Sanders2023intervals} is used to certify the soundness of
          computations in floating point arithmetic.
\end{itemize}
Finally the authors' own packages \href{https://github.com/kalmarek/Groups.jl}{Groups.jl},
\href{https://github.com/kalmarek/StarAlgebas.jl}{StarAlgebras.jl}, and
\href{https://github.com/kalmarek/SymbolicWedderburn.jl}{SymbolicWedderburn.jl}
provide the modelling of the algebraic structures.
Thanks especially to the last package, all computational statements with $R = 2$
can be reproven on an average desktop computer within minutes.
Same computations with $R = 3$ though require substantial computational resources
and patience.

\bibliographystyle{amsalpha}
\bibliography{bibliography}

\end{document}